\newtheorem{theo}{Theorem}
\newtheorem*{theo*}{Theorem}
\newtheorem{coro}{Corollary}
\newtheorem{lemm}{Lemma}
\theoremstyle{remark}
\newtheorem{rema}{\bf Remark}
\newtheorem{example}{\bf Example}
\numberwithin{equation}{section}
\begin{document}

\title{Homology Covers and Automorphisms: Examples}

\author{Rub\'en A. Hidalgo}
\address{Departamento de Matem\'atica y Estad\'{\i}stica, Universidad de La Frontera. Temuco, Chile}
%{ORCID: 0000-0003-4070-2819} 
\email{ruben.hidalgo@ufrontera.cl}

\thanks{Partially supported by Projects Fondecyt 1230001 and 1220261}

\subjclass[2020]{Primary 30F10; 30F40; 14H37}
\keywords{Fuchsian groups, Riemann surfaces, Automorphisms, Algebraic curves}

%%%%%%%%%%%%%%%%%
%%%%%%%%%%%%%%%%%

\begin{abstract}
Let $S$ be a Riemann surface, with a non-abelian fundamental group, and $\widetilde{S}_{k}$ be its $k$-homology cover, where either $k \geq 2$ is an integer of $k=\infty$. The surface 
$\widetilde{S}_{k}$ admits a group of conformal automorphisms $M_{k} \cong {\rm H}_{1}(S;{\mathbb Z}_{k})$ such that $S=\widetilde{S}_{k}/M_{k}$. If $L$ is a group of conformal automorphisms of $S$, then there is a short exact sequence  $1 \to M_{k} \to \widetilde{L}_{k} \to L \to 1$, where $\widetilde{L}_{k}$ is a group of conformal automorphisms of $\widetilde{S}_{k}$. In general, the short exact sequence does not need to be split. This paper explores situations in which the splitting is or is not achieved and we provides examples of both scenarios.
\end{abstract}

\maketitle

%%%%%%%%%%%%%%%%%
%%%%%%%%%%%%%%%%%
\section{Introduction}
In this paper, $S$ will be a connected Riemann surface with a non-abelian fundamental group (we will refer to it as non-exceptional for short); equivalently, $S$ is not homeomorphic to the sphere, the plane, the one-punctured plane, or the torus. The surface $S$ might be of finite type (i.e., $\pi_{1}(S)$ is finitely generated) or of infinite type. In \cite{Ker,Ian}, there is provided the topological classification of surfaces (of finite and infinite types). The non-exceptional condition asserts that the holomorphic universal cover of $S$  is the hyperbolic plane ${\mathbb H}^{2}$. So, up to biholomorphisms, $S={\mathbb H}^{2}/\Gamma$, for some torsion-free Fuchsian group $\Gamma \cong \pi_{1}(S)$. The Fuchsian group $\Gamma$ is uniquely determined by $S$ up to conjugation.

If either (i) $k \geq 2$ is an integer or (ii) $k=\infty$, then we set by $\Gamma_{k}$ the (characteristic) subgroup of $\Gamma$ which is generated by all of its commutators together with, if $k \neq \infty$, all the $k$-powers of all elements of $\Gamma$. The Riemann surface $\widetilde{S}_{k}={\mathbb H}^{2}/\Gamma_{k}$ is the $k$-homology cover of $S$ (if $k=\infty$, then $\widetilde{S}_{\infty}$ is the homology cover). If $S$ is not homeomorphic to a once-punctured closed surface, then $\widetilde{S}_{\infty}$ is known to be homeomorphic to the Loch Ness monster (the surface of infinite genus and one end) \cite{BH} (this fact is reproved in \cite{Biringer}, moreover, if $S$ is of infinite type, then $\widetilde{S}_{k}$ is also homeomorphic to the Loch Ness monster).
When $S$ is of finite type, in \cite{Hidalgo:Homology, Hidalgo:nodedfunction, Maskit:homology}, it was observed that $\widetilde{S}_{\infty}$ determines it uniquely up to biholomorphism (this can be seen as a kind of Fuchsian groups version of Torelli's theorem). For $S$ of infinite type, such a homology-rigidity result is not known to be true. In the case that 
$S$ is a closed Riemann surface, and $k \geq 2$ is finite, then $\widetilde{S}_{k}={\mathbb H}^{2}/\Gamma_{k}$ determines $S$ for suitable values of $k$ \cite{Hidalgo:Fermat} (again, this rigidity property is not known for the case of non-comapct surfaces).

Let us denote by ${\rm Aut}(S)$ and ${\rm Aut}(\widetilde{S}_{k})$ the groups of conformal automorphisms of $S$ and $\widetilde{S}_{k}$, respectively. The group ${\rm Aut}(\widetilde{S}_{k})$ contains a subgroup 
$M_{k}\cong {\rm H}_{1}(S;{\mathbb Z}_{k})$, where ${\mathbb Z}_{k}$ denotes the cyclic group of order $k$, such that $S=\widetilde{S}_{k}/M_{k}$. 
Let ${\rm Aut}_{M_{k}}(\widetilde{S}_{k})$ be the normalizer of $M_{k}$ in ${\rm Aut}(\widetilde{S}_{k})$.

If $S$ is of finite type, then ${\rm Aut}_{M_{\infty}}(\widetilde{S}_{\infty})={\rm Aut}(\widetilde{S}_{\infty})$ \cite{Hidalgo:Homology}. If $S$ is a closed Riemann surface of genus $g \geq 2$, and 
either (i) $k=2$ and $S$ hyperelliptic, or (ii) $k=p^{r}$, where $p > 84(g-1)$ is a prime integer, then ${\rm Aut}_{M_{k}}(\widetilde{S}_{k})={\rm Aut}(\widetilde{S}_{k})$ \cite{Hidalgo:Fermat}.

Let $\pi_{k}:\widetilde{S}_{k} \to S$ be a Galois covering, with deck group $M_{k}$. Then, as $\Gamma_{k}$ is a characteristic subgroup of $\Gamma$, there exists a short exact sequence 
\begin{equation}\label{exacta0}
\begin{array}{l} 
1 \to M_{k}\cong {\rm H}_{1}(S;{\mathbb Z}_{k}) \to {\rm Aut}_{M_{k}}(\widetilde{S}_{k}) \stackrel{\theta_{k}}{\to} {\rm Aut}(S) \to 1,
\end{array}
\end{equation}
such that, for every $\psi \in {\rm Aut}_{M_{k}}(\widetilde{S}_{k})$, it holds that 
$\pi_{k} \circ \psi = \theta_{k}(\psi) \circ \pi_{k}.$

In particular, if $L \leq {\rm Aut}(S)$ and $\widetilde{L}_{k}=\theta_{k}^{-1}(L)$, then the above short exact sequence restricts to a short exact sequence
\begin{equation}\label{exacta1}
\begin{array}{l}
1 \to M_{k}\cong {\rm H}_{1}(S;{\mathbb Z}_{k}) \to \widetilde{L}_{k} \stackrel{\theta_{k}}{\to} L \to 1.
\end{array}
\end{equation}

If $S$ is of finite type (so $L$ is finite), $k \neq \infty$, and the order of $L$ is relatively prime to $k$, then (as a consequence of the Schur-Zassenhaus theorem) the short exact sequence \eqref{exacta1} splits. Since: (i) for $S$ of infinite type the group $M_{k}$ is not finite, and (ii) for $S$ of finite type the group $M_{\infty}$ is also infinite, we cannot use the Schur-Zassenhaus theorem to check if \eqref{exacta1} splits or not in the general case.  In the general situation, the splitting situation does not hold. 

Let us start with the following simple observation, concerning the splitting of the above short exact sequence.

\begin{theo}\label{teo6}
Let $S$ be a non-exceptional Riemann surface, and $L \leq {\rm Aut}(S)$.
For $k \in \{2,3,\ldots\} \cup \{\infty\}$, let $\widetilde{L}_{k}=\theta_{k}^{-1}(L)<{\rm Aut}(\widetilde{S}_{k})$ and $\theta_{k}$ be as in the short exact sequence \eqref{exacta0}.

\begin{enumerate}[leftmargin=*,align=left]
\item If $L=\langle \phi \rangle \cong {\mathbb Z}_{l}$, where $l \geq 2$, then the short exact sequence \eqref{exacta1} splits, so $\widetilde{L}_{k} \cong {\rm H}_{1}(S;{\mathbb Z}_{k}) \rtimes {\mathbb Z}_{l}$, if either (i) or (ii) below holds. 
\begin{enumerate}
\item[(i)] $\phi$ has fixed points.
\item[(ii)] $k$ is finite and relatively prime to $l$.
\end{enumerate}

\item \begin{enumerate}[leftmargin=*,align=left]
\item[(a)] Fix a presentation ${\mathcal P}=\{ \{\phi_{j}\}_{j \in J}: \{R_{i}(\{\phi_{j}\}_{j \in J})=1\}_{i \in I}\}$ for $L$, and fix $\{\psi_{j}\}_{j \in J} \subset \widetilde{L}_{k}$ such that $\theta_{k}(\psi_{j})=\phi_{j}$.

\item[(b)] Let $m_{i} \in M_{k}$ be such that $R_{i}(\{\psi_{j}\}_{j \in J})=m_{i}$, and let 
$M_{k,{\mathcal P},\{\psi_{j}\}_{j \in J}}$ be the minimal $\widetilde{L}_{k}$-invariant subgroup of $M_{k}$ containing all the elements $m_{i}$, where $i \in I$.

\item[(c)] Let us consider the Riemann surface $W_{k,{\mathcal P},\{\psi_{j}\}_{j \in J}}=\widetilde{S}_{k}/ M_{k,{\mathcal P},\{\psi_{j}\}_{j \in J}}$, together its following groups of automorphisms
$$\begin{array}{l}
{\mathcal G}_{k,{\mathcal P},\{\psi_{j}\}_{j \in J}}=\widetilde{L}_{k}/M_{k,{\mathcal P},\{\psi_{j}\}_{j \in J}},\;
 {\mathcal K}_{k,{\mathcal P},\{\psi_{j}\}_{j \in J}}=M_{k}/ M_{k,{\mathcal P},\{\psi_{j}\}_{j \in J}}, \;
\hat{L}=\langle  \psi_{j}: j \in J\rangle/M_{k,{\mathcal P},\{\psi_{j}\}_{j \in J}}.
\end{array}$$

\end{enumerate}

\end{enumerate}

Then the following short exact sequence splits
\begin{equation}\label{exacta2}
\begin{array}{l}
1 \to {\mathcal K}_{k,{\mathcal P},\{\psi_{j}\}_{j \in J}}=M_{k}/M_{k,{\mathcal P},\{\psi_{j}\}_{j \in J}} \to {\mathcal G}_{k,{\mathcal P},\{\psi_{j}\}_{j \in J}}=\widetilde{L}_{k}/M_{k,{\mathcal P},\{\psi_{j}\}_{j \in J}} \stackrel{\theta_{k}}{\to} L \to 1,
\end{array}
\end{equation}
and, in particular,
$$\begin{array}{l}
{\mathcal G}_{k,{\mathcal P},\{\psi_{j}\}_{j \in J}} = {\mathcal K}_{k,{\mathcal P},\{\psi_{j}\}_{j \in J}} \rtimes \hat{L} \cong {\mathcal K}_{k,{\mathcal P},\{\psi_{j}\}_{j \in J}} \rtimes L.
\end{array}$$
\end{theo}

\begin{rema}
Note that case (ii) of part (1), in the above result, is just a consequence of the Schur-Zassenhaus theorem. In part (2), the group $M_{k,{\mathcal P},\{\psi_{j}\}_{j \in J}}$ depends on both ${\mathcal P}$ and the choices of $\psi_{j}$ (see Example \ref{ej1}).\qed
\end{rema}

\begin{coro}\label{coro2}
The short exact sequence \eqref{exacta1} splits, so $\widetilde{L}_{k} \cong M_{k} \rtimes L$, if and only if there is some ${\mathcal P}$ and corresponding choices of $\psi_{j}$ such that $M_{k,{\mathcal P},\{\psi_{j}\}_{j \in J}}=\{1\}$.
\end{coro}

In Section \ref{Sec:ejemplito}, we consider an application to Galois closures (see Corollary \ref{coro3}).

In the case that (i) $L$ is not a cyclic group or (ii) $L$ is a finite cyclic group, either of order not relatively prime to the exponent of $\widehat{\mathcal A}$ or acting freely, it might happen that the short exact sequence \eqref{exacta1} doesn't split.
In Section \ref{Sec:noramificado}, we observe (see Lemma \ref{lemita1}) that for $L \cong {\mathbb Z}_{l}$ a finite cyclic group acting freely on a closed Riemann surface $S$, the splitting situation holds if and only if $k$ is relatively prime to $l$. 
In Section \ref{Sec:S3}, we consider the case when $L \cong {\mathfrak S}_{3}$, the symmetric group of order $6$, is a group of conformal automorphisms of a closed Riemann surface $S$, where the elements of order three act freely, the elements of order two have fixed points, and $S/L$ has genus zero. The Schur-Zassenhaus theorem asserts that, for $k$ relatively prime to $3$, the short exact sequence \eqref{exacta1} splits. In Lemma \ref{lema2}, we observe that, for $k$ divisible by $3$, the short exact sequence never splits. 
Examples \ref{Sec:(m,k)=(2,4)}, \ref{Sec:711}, \ref{Sec:k=2} and \ref{Sec:(n,k)=(5,3)}, describe some non-splitting and splitting cases.

\medskip\noindent
{\bf Notations:} If $\Gamma$ is a group, then we denote by $\Gamma'$ its derived subgroup (i.e., the subgroup generated by all the commutators of $\Gamma$), by $\Gamma^{k}$ the subgroup generated by all the $k$-powers of all the elements of $\Gamma$, and by $\Gamma_{k}$ the subgroup generated by $\Gamma'$ and $\Gamma^{k}$. Note that all of the above subgroups are characteristic ones. We denote by ${\mathbb Z}_{l}$ the cyclic group of order $l$.

%%%%%%%%%%%%%
%%%%%%%%%%%%%
\section{Homology coverings of surfaces}\label{Sec:homocovers}
In this section, we discuss some of the general facts concerning the $k$-homology covers of surfaces. 

%%%%%%%%%%%%%%%%%
\subsection{The $k$-homology cover of a Riemann surface}
Let $S$ be a non-exceptional Riemann surface (it might or may not be of finite type). By the uniformization theorem, there is a (torsion-free) Fuchsian group $\Gamma$ such that (up to biholomorphisms) $S={\mathbb H}^{2}/\Gamma$. 
The condition for $S$ to be of finite type is equivalent for $\Gamma$ to be finitely generated, and for it to be analytically finite is that $\Gamma$ is of finite hyperbolic area.
If $k \geq 2$ or $k=\infty$, then the Riemann surface $\widetilde{S}_{k}={\mathbb H}^{2}/\Gamma_{k}$ is called the $k$-homology cover of $S$ (if $k=\infty$, then it is also called the homology cover).
If $S$ is not homeomorphic to a once-punctured closed surface, then $\widetilde{S}_{\infty}$ is homeomorphic to the Loch Ness monster, i.e., the unique (up to homeomorphisms) surface of infinite genus and one end (see Corollary 6.2 in \cite{Ata} for the case of closed Riemann surfaces, and \cite{BH} for the general situation).

%%%%%%%%%%%
\subsection{A connection to  the jacobian variety}\label{torelli}
Let us assume $S$ is a closed Riemann surface of genus $g \geq 1$. Its ${\mathbb C}$-vector space ${\rm H}^{1,0}(S)$ of holomorphic one-forms has dimension $g$. Let us denote by ${\rm H}^{1,0}(S)^{*}$ the dual space of ${\rm H}^{1,0}(S)$. There is a natural lattice embedding of ${\rm H}_{1}(S;{\mathbb Z})$ into ${\rm H}^{1,0}(S)$ which is induced by integration of one-forms on loops. 
The quotient $JS={\rm H}^{1,0}(S)^{*}/{\rm H}_{1}(S;{\mathbb Z})$, called the jacobian variety of $S$,  is a complex torus of dimension $g$, with a principal polarization induced by the intersection form in homology. Each point $p \in S$ induces a holomorphic embedding $\iota_{p}:S \hookrightarrow JS$, which is defined by $\iota(q)=\int_{\gamma} \cdot$, where $\gamma$ is an arc connecting $p$ to $q$.
If $\pi:{\rm H}^{1,0}(S)^{*} \to J(S)$ is a Galois covering, with deck group ${\rm H}_{1}(S;{\mathbb Z})$, then $\pi^{-1}(\iota_{p}(S))$ is biholomorphic to $\widetilde{S}_{\infty}$. Moreover, the restriction of the action of ${\rm H}_{1}(S;{\mathbb Z})$ on this lifting coincides with $M_{\infty}$. Torelli's theorem asserts that the Riemann surface $S$ is uniquely determined, up to biholomorphisms, by $JS$ (as principally polarized variety).

%%%%%%%%%%%%%%
\subsection{Some rigidity theorems}
As seen in Section \ref{torelli}, two closed Riemann surfaces of genera at least one are biholomorphically equivalent if and only if their jacobian varieties are isomorphic as abelian varieties. 
A Fuchsian group version of Torelli's theorem is due to B. Maskit \cite{Maskit:homology}.

\begin{theo}[Torelli's co-compact Fuchsian groups type theorem \cite{Maskit:homology}]\label{teo0}
Two closed Riemann surfaces of genus at least two are biholomorphically equivalent if and only if their homology covers are biholomorphically equivalent.
\end{theo}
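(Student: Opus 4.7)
The forward direction is immediate by functoriality: the commutator subgroup $\Gamma'$ is characteristic in any surface group $\Gamma$, so a biholomorphism $S_1\to S_2$, lifted to a conjugation in ${\rm PSL}_{2}({\mathbb R})$ of uniformizing groups, automatically conjugates $\Gamma_1'$ to $\Gamma_2'$ and induces a biholomorphism $\widetilde{S}_{1,\infty}\to\widetilde{S}_{2,\infty}$.

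For the converse, suppose $f:\widetilde{S}_{1,\infty}\to\widetilde{S}_{2,\infty}$ is a biholomorphism. Since ${\mathbb H}^{2}$ is the universal cover of both surfaces, $f$ lifts to some $\tilde{f}\in{\rm PSL}_{2}({\mathbb R})$ satisfying $\tilde{f}\,\Gamma_1'\,\tilde{f}^{-1}=\Gamma_2'$. After replacing $\Gamma_1$ by its $\tilde{f}$-conjugate (which changes $S_1$ only by a biholomorphism), one may assume $\Gamma_1'=\Gamma_2'=:G$. The theorem then reduces to the purely group-theoretic assertion that, among cocompact torsion-free Fuchsian surface groups of genus $g\geq 2$, such a $\Gamma$ is determined by its commutator subgroup $G=\Gamma'$ up to conjugation in ${\rm PSL}_{2}({\mathbb R})$.

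I would establish this inside the normalizer $N:=N_{{\rm PSL}_{2}({\mathbb R})}(G)$, which contains both $\Gamma_1$ and $\Gamma_2$. Passing to $A:=N/G={\rm Aut}(\widetilde{S}_{\infty})$, the images $\bar{\Gamma}_i:=\Gamma_i/G$ are subgroups of $A$, each isomorphic to ${\mathbb Z}^{2g}$, each acting freely on $\widetilde{S}_{\infty}$ with quotient a closed Riemann surface of genus $g$, and each satisfying the condition that its preimage in $N$ has commutator subgroup exactly $G$. By Hurwitz, $|{\rm Aut}(S)|<\infty$ when $g\geq 2$, so the extension
\[
1\to M_{\infty}\to{\rm Aut}(\widetilde{S}_{\infty})\to{\rm Aut}(S)\to 1,
\]
valid because ${\rm Aut}_{M_{\infty}}(\widetilde{S}_{\infty})={\rm Aut}(\widetilde{S}_{\infty})$ for $S$ of finite type, realizes $M_{\infty}$ as a normal subgroup of finite index in $A$. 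A natural candidate characterization is: $M_{\infty}$ is, up to conjugation in $A$, the unique free abelian normal subgroup of $A$ of rank $2g$ acting freely and cocompactly on $\widetilde{S}_{\infty}$ with quotient of Euler characteristic $2-2g$.

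The main obstacle is precisely this intrinsic recognition of $M_{\infty}$ inside ${\rm Aut}(\widetilde{S}_{\infty})$; the genus $\geq 2$ hypothesis is essential throughout, via Hurwitz finiteness, the Fuchsian rigidity of $\Gamma$, and the fact that $G$ has limit set equal to $\partial{\mathbb H}^{2}$ as a normal subgroup of a lattice. An alternative analytic route, closer to the name ``Torelli'' in the statement, would use Remark \ref{torelli}: one identifies $H^{1,0}(S)$ with the subspace of $H^{1,0}(\widetilde{S}_{\infty})$ consisting of exact holomorphic $1$-forms that descend to a closed Riemann surface quotient, recovers the period lattice $H_{1}(S;{\mathbb Z})=M_{\infty}$, reconstructs the pair $(JS,\iota_{p}(S))$ intrinsically, and concludes via the classical Torelli theorem applied to the jacobians that $S_1\cong S_2$.
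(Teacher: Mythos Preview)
The paper does not give its own proof of this theorem; it is quoted from Maskit \cite{Maskit:homology} with a citation and no argument, so there is nothing in the paper to compare your attempt against directly.

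As for your proposal on its own merits: the forward direction is fine, and the reduction to the Fuchsian statement ``$\Gamma_1'=\Gamma_2'\Rightarrow\Gamma_1=\Gamma_2$ for cocompact torsion-free surface groups'' is the correct move (this is exactly how Theorem~\ref{teo1} is phrased). However, the converse argument is incomplete and partly circular. You invoke the equality ${\rm Aut}_{M_{\infty}}(\widetilde{S}_{\infty})={\rm Aut}(\widetilde{S}_{\infty})$ to place $M_{\infty}$ as a finite-index normal subgroup of $A$; but in this paper that equality is stated as a \emph{consequence} of the rigidity theorems (Theorems~\ref{teo0} and~\ref{teo1}), not as an independent input, so using it here begs the question. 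Even granting it, you then explicitly concede that ``the main obstacle is precisely this intrinsic recognition of $M_{\infty}$ inside ${\rm Aut}(\widetilde{S}_{\infty})$'' and leave the candidate characterization unproved. That recognition step is the whole content of Maskit's theorem; what you have written is a reformulation of the problem, not a solution.

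The alternative analytic route via Remark~\ref{torelli} has the same gap: recovering $H^{1,0}(S)$ and the period lattice ``intrinsically'' from $\widetilde{S}_{\infty}$ presupposes that you can single out the correct quotient $S$, which is again the point at issue. In short, you have accurately located where the difficulty lives, but the proposal is a roadmap rather than a proof.
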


There seems to be no version of Torelli's theorem, at least known to the author, for non-closed Riemann surfaces. On the other hand, the above Fuchsian version has been generalized to certain classes of non-elementary Kleinian groups. 

\begin{theo}[Torelli's Kleinian groups type theorem \cite{Hidalgo:Homology, Hidalgo:nodedfunction}]\label{teo1}
Let $K$ and $\widehat{K}$ be two torsion-free, finitely generated, and non-elementary Kleinian groups, each admitting an invariant connected component of its region of discontinuity. 
If $K'=\widehat{K}'$, then $K=\widehat{K}$. 
\end{theo}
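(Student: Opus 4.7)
Set $H := K' = \widehat{K}'$; the plan is to recover $K$ canonically from $H$ (viewed as an abstract subgroup of $\mathrm{PSL}(2,\mathbb{C})$), so that $K$ and $\widehat{K}$ are forced to emerge as the same subgroup.

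First I would verify that $H$ is itself a non-elementary Kleinian group. Since $K$ is non-elementary and torsion-free, $H$ is a non-trivial normal subgroup of $K$, and a classical fact asserts that any non-trivial normal subgroup of a non-elementary Kleinian group is itself non-elementary with the same limit set. Hence $\Lambda(H) = \Lambda(K) = \Lambda(\widehat{K})$, so $\Omega := \Omega(H) = \Omega(K) = \Omega(\widehat{K})$; moreover the normalizer $N := N_{\mathrm{PSL}(2,\mathbb{C})}(H)$ is discrete (another standard consequence for non-elementary Kleinian groups). Both $K$ and $\widehat{K}$ therefore sit inside the Kleinian group $N$.

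Next I would isolate $K$ and $\widehat{K}$ inside $N$ via their invariant components. Let $\Delta$ and $\widehat{\Delta}$ be the $K$- and $\widehat{K}$-invariant components of $\Omega$; each is a connected, open, $H$-invariant subset of $\Omega$ and hence a single connected component of $\Omega$. The quotient $\Delta/H$ is then the homology cover (in the sense of Section~\ref{Sec:homocovers}) of the finite-type Riemann surface $\Delta/K$, and $K/H$ acts on $\Delta/H$ as the group of deck transformations. The key step is to invoke the Fuchsian Torelli-type theorem (Theorem~\ref{teo0}) together with its refinements in \cite{Hidalgo:Homology, Hidalgo:nodedfunction}: this recovers the conformal structure of $\Delta/K$ from $\Delta/H$, after which a lifting argument identifies $K$ uniquely with the maximal torsion-free, finitely generated, non-elementary subgroup of $N$ that contains $H$ and preserves $\Delta$. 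The same characterization applies to $\widehat{K}$ and $\widehat{\Delta}$; a brief argument then forces $\Delta = \widehat{\Delta}$ (both must be the component whose quotient by $H$ realises the canonical homology covering structure encoded in $H$), and hence $K = \widehat{K}$.

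The principal obstacle is this last step: one must extract $K$ not merely up to isomorphism but as the precise subgroup of $\mathrm{PSL}(2,\mathbb{C})$ containing $H$. In the cocompact Fuchsian setting there is the clean fact $N_{\mathrm{PSL}(2,\mathbb{R})}(\Gamma') = \Gamma$; the Kleinian analogue is subtler, and the hypotheses of finite generation, torsion-freeness, and the existence of a connected invariant component enter decisively, both to rule out strictly larger extensions of $H$ within $N$ with the same derived subgroup and to force the two potentially distinct invariant components $\Delta, \widehat{\Delta}$ to coincide.
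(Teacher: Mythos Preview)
The paper does not contain a proof of this theorem; it is stated with a citation to \cite{Hidalgo:Homology, Hidalgo:nodedfunction} and no proof environment follows. There is therefore no in-paper argument to compare your proposal against.

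On its own merits, your proposal has the right scaffolding but a genuine gap at the decisive point. The opening reductions are fine: $H=K'$ is non-elementary, $\Lambda(H)=\Lambda(K)=\Lambda(\widehat{K})$, and the normalizer $N$ of $H$ is discrete, so $K,\widehat{K}\leq N$. The problem is that your ``key step'' explicitly invokes the ``refinements in \cite{Hidalgo:Homology, Hidalgo:nodedfunction}'', which \emph{are} the theorem you are trying to prove; this is circular. Once that appeal is removed, what remains is exactly the obstacle you yourself identify: you have neither shown that $\Delta=\widehat{\Delta}$, nor that $K$ is the unique torsion-free, finitely generated subgroup of $N$ containing $H$, preserving $\Delta$, and having $H$ as its derived subgroup. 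Theorem~\ref{teo0} (Maskit's cocompact Fuchsian result) does not by itself supply either conclusion in the Kleinian setting with an invariant component; the passage from ``$\Delta/H$ determines $\Delta/K$ up to biholomorphism'' to ``$K$ is determined as a subgroup of $\mathrm{PSL}_2(\mathbb{C})$'' is precisely the content carried by the cited references, not a consequence of what you have written.
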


\begin{rema}
If $K$ is a torsion-free, finitely generated, non-elementary Kleinian group admitting a $K$-invariant connected component
$\Delta$ of its region of discontinuity, then Ahlfors' finiteness theorem \cite{A1,A2} asserts that the quotient Riemann surface $\Delta/K$ is analytically finite.\qed
\end{rema}

If, in the above theorem, we assume the groups to be Fuchsian groups, then we obtain the following Fuchsian version of Torelli's theorem for hyperbolic finite type Riemann surfaces.

\begin{coro}
Two (hyperbolic) finite-type Riemann surfaces are biholomorphically equivalent if and only if their homology covers are biholomorphically equivalent.
\end{coro}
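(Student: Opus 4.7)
The plan is to deduce the corollary as a direct specialization of Theorem \ref{teo1} to the setting of Fuchsian groups, with only a small amount of translation between the surface and the group side.

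First I would fix Fuchsian uniformizations $S_{1}={\mathbb H}^{2}/\Gamma_{1}$ and $S_{2}={\mathbb H}^{2}/\Gamma_{2}$ of the two hyperbolic finite type Riemann surfaces. Since each $S_{i}$ is hyperbolic, it is in particular non-exceptional, so its fundamental group $\Gamma_{i}$ is non-abelian; since $S_{i}$ is of finite type, $\Gamma_{i}$ is finitely generated; and since $S_{i}$ is a manifold, $\Gamma_{i}$ is torsion-free. The homology cover is then $\widetilde{S}_{i,\infty}={\mathbb H}^{2}/\Gamma_{i}'$. The forward direction is immediate: a biholomorphism $S_{1}\to S_{2}$ lifts to a M\"obius transformation conjugating $\Gamma_{1}$ to $\Gamma_{2}$, and since the commutator subgroup is characteristic, this same transformation conjugates $\Gamma_{1}'$ to $\Gamma_{2}'$ and thus induces a biholomorphism of homology covers.

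For the non-trivial direction, assume there is a biholomorphism $\widetilde{S}_{1,\infty}\to\widetilde{S}_{2,\infty}$. Lifting it to the universal cover ${\mathbb H}^{2}$, I obtain a M\"obius transformation $T$ with $T\Gamma_{1}'T^{-1}=\Gamma_{2}'$. Replacing $\Gamma_{1}$ by the conjugate $\widehat{\Gamma}_{1}=T\Gamma_{1}T^{-1}$ (which uniformizes a biholomorphic copy of $S_{1}$) reduces the problem to the case $\widehat{\Gamma}_{1}{}'=\Gamma_{2}'$, i.e., two Fuchsian groups whose derived subgroups coincide.

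Now the key step is to check that $\widehat{\Gamma}_{1}$ and $\Gamma_{2}$, viewed as subgroups of ${\rm PSL}(2,{\mathbb C})$, satisfy all the hypotheses of Theorem \ref{teo1}: they are torsion-free, finitely generated Kleinian groups; they are non-elementary, because the hyperbolic assumption excludes precisely the surfaces whose Fuchsian groups are elementary (the sphere, plane, twice-punctured sphere, and torus), matching the non-exceptional hypothesis; and ${\mathbb H}^{2}$ is a non-empty connected invariant component of the region of discontinuity of each. With these checks in place, Theorem \ref{teo1} yields $\widehat{\Gamma}_{1}=\Gamma_{2}$, hence $T\Gamma_{1}T^{-1}=\Gamma_{2}$, and therefore $T$ descends to a biholomorphism $S_{1}\to S_{2}$.

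The main (and really only) obstacle is the non-elementary verification, since the statement speaks of ``hyperbolic finite type'' surfaces rather than explicitly of non-exceptional ones; once one observes that the exceptional surfaces are precisely the non-hyperbolic ones, the corollary is a one-line application of Theorem \ref{teo1}.
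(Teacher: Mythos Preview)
Your proposal is correct and follows exactly the route the paper indicates: the paper offers no separate proof, merely remarking that specializing Theorem~\ref{teo1} to Fuchsian groups yields the corollary, and your write-up supplies precisely the missing details (lifting the biholomorphism of homology covers to a M\"obius conjugation of derived subgroups, then invoking Theorem~\ref{teo1}). The only delicate point you already flag yourself---identifying ``hyperbolic finite type'' with the non-elementary/non-exceptional hypothesis---is handled in accordance with the paper's own conventions.
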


\begin{rema}
It is not known if the above Kleinian rigidity theorems are valid in the case of torsion-free infinitely generated Fuchsian groups. It is not even clear whether the equality of the derived subgroups asserts that the groups should be commensurable or not.\qed
\end{rema}

Since $\bigcap_{k \geq 2} \Gamma_{k}=\Gamma'$, it seems natural to ask if the surface $S$ is completely determined (up to biholomorphisms) by its $k$-homology cover, for 
$k \geq 2$.  If $S$ is a closed Riemann surface, then a partial answer is provided by the following.

\begin{theo}[\cite{Hidalgo:Fermat}]\label{teo2}
Two closed Riemann surfaces of genus $g \geq 2$ are biholomorphic if and only if their $k$-homology covers are biholomorphic in any of the following situations.
\begin{enumerate}[leftmargin=*,align=left]
\item[(i)] $k=\infty$.
\item[(ii)] $k=p^{r}$, where $p>84(g-1)$ is a prime integer and $r \geq 1$.
\item[(iii)] $k=2$ and one of the surfaces is hyperelliptic.
\end{enumerate}
\end{theo}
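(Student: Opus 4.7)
\emph{Proof proposal.} The ``only if'' direction is immediate: since $\Gamma_{k}$ is characteristic in the uniformizing Fuchsian group $\Gamma$, any biholomorphism $S \to S'$ lifts to a biholomorphism $\widetilde{S}_{k} \to \widetilde{S}'_{k}$. For the converse in each of the three cases, the common plan is to identify $M_{k}$ as a canonical, conjugation-invariant subgroup of ${\rm Aut}(\widetilde{S}_{k})$. Once this is done, any biholomorphism $\widetilde{S}_{k} \to \widetilde{S}'_{k}$ induces an isomorphism ${\rm Aut}(\widetilde{S}_{k}) \to {\rm Aut}(\widetilde{S}'_{k})$ carrying $M_{k}$ to $M'_{k}$, and therefore descends to a biholomorphism $S = \widetilde{S}_{k}/M_{k} \to \widetilde{S}'_{k}/M'_{k} = S'$.

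For case (i), $k = \infty$, I would reduce directly to Theorem \ref{teo1}. Let $\Gamma, \widehat{\Gamma} \leq {\rm PSL}(2,\mathbb{R})$ be the co-compact Fuchsian groups uniformizing $S$ and $S'$; then $\widetilde{S}_{\infty} = \mathbb{H}^{2}/\Gamma'$ and $\widetilde{S}'_{\infty} = \mathbb{H}^{2}/\widehat{\Gamma}'$, and any biholomorphism between them lifts to a M\"obius transformation, so after conjugation $\Gamma' = \widehat{\Gamma}'$. Theorem \ref{teo1} then forces $\Gamma = \widehat{\Gamma}$, giving $S \cong S'$.

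For case (ii), $k = p^{r}$ with $p > 84(g-1)$, I would use the Hurwitz bound $|{\rm Aut}(S)| \leq 84(g-1) < p$, which gives $\gcd(|{\rm Aut}(S)|, p) = 1$. Using the equality ${\rm Aut}_{M_{k}}(\widetilde{S}_{k}) = {\rm Aut}(\widetilde{S}_{k})$ recorded in the introduction, the short exact sequence yields
$$|{\rm Aut}(\widetilde{S}_{k})| = |M_{k}| \cdot |{\rm Aut}(S)| = p^{2gr} \cdot |{\rm Aut}(S)|.$$
Hence $M_{k}$ is a Sylow $p$-subgroup of ${\rm Aut}(\widetilde{S}_{k})$; being normal (it is the kernel of $\theta_{k}$), it is the unique Sylow $p$-subgroup and therefore characteristic.

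For case (iii), $k = 2$ with $S$ hyperelliptic, I would combine the splitting result for cyclic groups with fixed points (Theorem \ref{teo5}) with an explicit model of $\widetilde{S}_{2}$ as a generalized Fermat curve. The hyperelliptic involution $\iota$ has $2g+2$ fixed points and acts as $-1$ on $H_{1}(S;\mathbb{Z})$, hence trivially on $M_{2} \cong H_{1}(S;\mathbb{Z}_{2})$, so Theorem \ref{teo5} gives an elementary abelian subgroup $N \cong \mathbb{Z}_{2}^{2g+1} \leq {\rm Aut}(\widetilde{S}_{2})$ with $\widetilde{S}_{2}/N \cong S/\langle\iota\rangle \cong \mathbb{P}^{1}$ branched at the $2g+2$ Weierstrass images. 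This exhibits $\widetilde{S}_{2}$ as a generalized Fermat curve of type $(2, 2g+1)$; the known structure of the automorphism group of such curves \cite{HidalgoFermat} singles out $N$ as the unique elementary abelian $2$-subgroup realizing this Fermat structure, and then $M_{2}$ is identified inside $N$ as the unique index-$2$ subgroup whose quotient in $\widetilde{L}_{2}$ corresponds to the central involution $\iota$ of ${\rm Aut}(S)$. The main obstacle lies precisely here: for small $g$, or for very symmetric configurations of Weierstrass points, ${\rm Aut}(\widetilde{S}_{2})$ can acquire extra automorphisms permuting several candidate ``Fermat'' subgroups, and one must verify that both $N$ and the index-$2$ subgroup $M_{2} \leq N$ remain canonically distinguished in all such cases.
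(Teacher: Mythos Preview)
The paper does \emph{not} prove Theorem~\ref{teo2}: it is quoted from \cite{HidalgoFermat}, which in the bibliography is listed as ``In preparation''. So there is no in-paper proof to compare against; I can only comment on the internal soundness of your proposal.

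Case~(i) is fine and is exactly the reduction to Theorem~\ref{teo1} (equivalently, to Maskit's Theorem~\ref{teo0}).

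In case~(ii) there is a genuine circularity. You invoke the equality ${\rm Aut}_{M_{k}}(\widetilde{S}_{k})={\rm Aut}(\widetilde{S}_{k})$ ``recorded in the introduction'' in order to compute $|{\rm Aut}(\widetilde{S}_{k})|$ via the exact sequence, and then conclude that $M_{k}$ is Sylow and normal, hence characteristic. But that equality is precisely the assertion that $M_{k}\trianglelefteq{\rm Aut}(\widetilde{S}_{k})$, which in the present paper is stated without proof and is itself part of the package coming from \cite{HidalgoFermat}; using it here assumes what has to be shown. The fix is easy and makes the argument self-contained: since $M_{k}\cong{\mathbb Z}_{p^{r}}^{2g}$ acts freely, Riemann--Hurwitz gives $\widetilde{g}-1=p^{2gr}(g-1)$, and Hurwitz's bound on $\widetilde{S}_{k}$ yields $|{\rm Aut}(\widetilde{S}_{k})|\le 84(\widetilde{g}-1)=84(g-1)\,p^{2gr}$. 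As $p>84(g-1)$, the $p$-part of this bound is exactly $p^{2gr}=|M_{k}|$, so $M_{k}$ is a Sylow $p$-subgroup of ${\rm Aut}(\widetilde{S}_{k})$ with no appeal to normality. Now if $f:\widetilde{S}_{k}\to\widetilde{S}'_{k}$ is a biholomorphism, $f_{*}(M_{k})$ and $M'_{k}$ are both Sylow $p$-subgroups of ${\rm Aut}(\widetilde{S}'_{k})$, hence conjugate by some $g$, and $g^{-1}\circ f$ descends to $S\to S'$. You do not need $M_{k}$ to be characteristic, only Sylow.

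For case~(iii) your outline via generalized Fermat curves of type $(2,2g+1)$ is the intended route (this is what the reference \cite{HidalgoFermat} is about), and you correctly identify the real content: showing that the ``Fermat'' group $N$ is canonical inside ${\rm Aut}(\widetilde{S}_{2})$, which is exactly the uniqueness result proved in \cite{HKLP} for $(n-1)(k-1)>2$. One correction: once $N$ is canonical you should not try to single out $M_{2}$ inside $N$; rather, any biholomorphism $\widetilde{S}_{2}\to\widetilde{S}'_{2}$ carries $N$ to $N'$ and hence descends to a biholomorphism of the genus-zero orbifolds $\widetilde{S}_{2}/N\cong\widetilde{S}'_{2}/N'$, i.e.\ a M\"obius map sending the $2g+2$ branch values of $S$ to those of $S'$, which already forces $S\cong S'$ since a hyperelliptic curve is determined by its Weierstrass branch set.
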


%%%%%%%%%%%%%%%%%
\subsection{The rigidity in the presence of torsion}
The above rigidity results are not generally valid for closed Riemann orbifolds \cite{Hidalgo:MoscowHomology}. For certain classes of 
Riemann orbifolds, the same commutator rigidity result holds.

\begin{theo}[\cite{Hidalgo:MoscowHomology, HKLP}]\label{teo3}
Let $n_{1}, n_{2}, k_{1}, k_{2} \geq 2$ be integers such that $(n_{j}-1)(k_{j}-1)>2$.
Let $\Gamma$ and $\widehat{\Gamma}$ be two co-compact Fuchsian groups of respective signatures $(0;k_{1},\stackrel{n_{1}+1}{\cdots}, k_{1})$ and
$(0;k_{2},\stackrel{n_{2}+1}{\cdots}, k_{2})$. If $\Gamma'=\widehat{\Gamma}'$, 
then $\Gamma=\widehat{\Gamma}$. 
\end{theo}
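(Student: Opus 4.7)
The plan is to rephrase the hypothesis in terms of finite subgroups of $\mathrm{Aut}(X)$, where $X = \mathbb{H}^{2}/\Lambda$ with $\Lambda = \Gamma' = \widehat{\Gamma}'$, and then exploit the rigidity of the balanced signatures $(0;k,\ldots,k)$.

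First I would check that $\Lambda$ is torsion-free and co-compact: it has finite index $k_{1}^{n_{1}}$ in $\Gamma$ (and $k_{2}^{n_{2}}$ in $\widehat{\Gamma}$), and every elliptic element of $\Gamma$ has non-trivial image in $\Gamma/\Gamma' \cong \mathbb{Z}_{k_{1}}^{n_{1}}$, so no non-trivial torsion lies in $\Lambda$. Hence $X = \mathbb{H}^{2}/\Lambda$ is a closed Riemann surface, and since $\Lambda$ is characteristic in both $\Gamma$ and $\widehat{\Gamma}$, both are contained in the normalizer $N := N_{\mathrm{PSL}_{2}(\mathbb{R})}(\Lambda)$, with quotient $N/\Lambda \cong \mathrm{Aut}(X)$. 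Letting $A := \Gamma/\Lambda$ and $B := \widehat{\Gamma}/\Lambda$, the groups $\Gamma$ and $\widehat{\Gamma}$ are precisely the preimages of $A$ and $B$ under the quotient map $N \to N/\Lambda$, so the problem reduces to showing $A = B$ inside $\mathrm{Aut}(X)$.

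Next I would translate the hypothesis $\Gamma' = \Lambda$ into a constraint on $A \leq \mathrm{Aut}(X)$: since the abelianization of the Fuchsian preimage of any $H \leq \mathrm{Aut}(X)$ can be read off from the signature of $X/H$, the demand that this abelianization equal $A \cong \mathbb{Z}_{k_{1}}^{n_{1}}$ forces $X/A$ to have the balanced signature $(0; k_{1}, \stackrel{n_{1}+1}{\ldots}, k_{1})$, and analogously for $B$. Riemann--Hurwitz then yields
$$k_{1}^{n_{1}-1}\bigl[(n_{1}-1)k_{1} - (n_{1}+1)\bigr] \;=\; 2g(X) - 2 \;=\; k_{2}^{n_{2}-1}\bigl[(n_{2}-1)k_{2} - (n_{2}+1)\bigr].$$
This numerical identity alone is not enough (for instance $(n,k) = (2,6)$ and $(n,k) = (3,3)$ both yield $g = 10$), so the rigidity must come from the joint structure of $A$ and $B$ inside $\mathrm{Aut}(X)$.

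For the heart of the argument I would consider the finite subgroup $G := \langle A, B\rangle \leq \mathrm{Aut}(X)$ and its Fuchsian preimage $\Gamma_{G} \leq N$, which contains both $\Gamma$ and $\widehat{\Gamma}$. Its signature must accommodate elliptic elements of orders $k_{1}$ and $k_{2}$, and comparing $\Gamma_{G}^{\mathrm{ab}}$ with the balanced abelianizations of $\Gamma$ and $\widehat{\Gamma}$ should, together with the hyperbolicity hypothesis $(n_{j}-1)(k_{j}-1) > 2$, constrain the situation enough to force $G = A = B$. The main obstacle lies here: the numerical Riemann--Hurwitz analysis is blind to such coincidences, and a clean proof likely requires refined invariants -- most naturally the Chevalley--Weil decomposition of $H^{0}(X,\Omega^{1})$ into $A$- and $B$-isotypic components, which for a balanced genus-zero quotient has a very restrictive shape -- or a detailed case analysis on the admissible signatures of $\Gamma_{G}$. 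The hyperbolicity condition enters nontrivially to exclude the Euclidean and spherical configurations where analogous rigidity statements are known to fail.
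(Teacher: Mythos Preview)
The paper does not prove this theorem; it is quoted as a result from \cite{Hidalgo:MoscowHomology, HKLP} and no argument is given in the present text. So there is no ``paper's own proof'' to compare against here, only the cited literature.

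Your reduction is correct and is exactly the setup used in those references: $\Lambda=\Gamma'=\widehat{\Gamma}'$ is torsion-free of finite index, $X=\mathbb{H}^{2}/\Lambda$ is a closed Riemann surface (a \emph{generalized Fermat curve} of type $(k_{1},n_{1})$ and simultaneously of type $(k_{2},n_{2})$), and the problem becomes showing that the two abelian subgroups $A=\Gamma/\Lambda\cong\mathbb{Z}_{k_{1}}^{n_{1}}$ and $B=\widehat{\Gamma}/\Lambda\cong\mathbb{Z}_{k_{2}}^{n_{2}}$ of $\mathrm{Aut}(X)$ coincide.

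Where your proposal stops short is precisely the point you flag: passing to $G=\langle A,B\rangle$ and staring at Riemann--Hurwitz is not enough, as your own $(n,k)=(2,6)$ versus $(3,3)$ example shows. The cited proofs do not proceed via a Chevalley--Weil computation or a signature case analysis on $G$. Instead, the main theorem of \cite{HKLP} establishes directly that on a generalized Fermat curve of type $(k,n)$ with $(k-1)(n-1)>2$, the generalized Fermat group is the \emph{unique} subgroup of $\mathrm{Aut}(X)$ with these properties, hence is normal (indeed characteristic) in $\mathrm{Aut}(X)$. That uniqueness theorem is the missing ingredient, and its proof uses the explicit fiber-product algebraic model of $X$ as an intersection of Fermat hypersurfaces together with an analysis of how any automorphism must permute the canonical generators; this is substantially more than a numerical or representation-theoretic bookkeeping argument. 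Once uniqueness is available, $A=B$ is immediate and $\Gamma=\widehat{\Gamma}$ follows. So your outline is sound as a reduction, but the decisive step lives in \cite{HKLP} and is not something one recovers from the Riemann--Hurwitz/Chevalley--Weil toolkit alone.
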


%%%%%%%%%%%%%%%
%%%%%%%%%%%%%%%
\section{Splitting results: proof of Theorem \ref{teo6}}\label{Sec:mainresults}
In this section, $S$ will denote a non-exceptional Riemann surface (either of finite type or infinite type), and $L \leq {\rm Aut}(S)$ (note that $L$ could be infinite, but it must be countable). 

By abuse of notation, we will identify ${\rm H}_{1}(S;{\mathbb Z})$ with $M_{\infty}$, and if $x \in M_{\infty}={\rm H}_{1}(S;{\mathbb Z})$, then we will use the same letter to denote the element $\widetilde{\theta}_{k}(x) \in M_{k}$, where $\widetilde{\theta}_{k}:M_{\infty} \to M_{k}=M_{\infty}/M_{\infty}^{k}$ is the natural projection homomorphism. 

We will denote by $\widehat{\pi}_{k}:\widetilde{S}_{\infty} \to \widetilde{S}_{k}$ a Galois covering with deck group $M_{\infty}^{k}$ such that $\pi_{\infty}=\pi_{k} \circ \widehat{\pi}_{k}$.
The surjective homomorphism $\widetilde{\theta}_{k}:M_{\infty} \to M_{k}$ extends to a surjective homomorphism $\widetilde{\theta}_{k}:\widetilde{L}_{\infty} \to \widetilde{L}_{k}=\widetilde{L}_{\infty}/M_{\infty}^{k}$ such that $\theta_{\infty}=\theta_{k} \circ \widetilde{\theta}_{k}$, and
$\widehat{\pi}_{k} \circ \psi=\widetilde{\theta}_{k}(\psi) \circ \widehat{\pi}_{k}$. If $\psi \in \widetilde{L}_{\infty}$, then many times we will use the same letter $\psi$ or the letter $\Psi$ to denote  $\widetilde{\theta}(\psi) \in \widetilde{L}_{k}$. 

\begin{rema}
If $S$ is a closed Riemann surface of genus $g \geq 2$, then ${\rm H}_{1}(S;{\mathbb Z}) \cong {\mathbb Z}^{2g}$. If $S$ is non-closed of finite type, say of genus $g \geq 0$ and $n \geq 1$ boundary components ($2g-2+n>0$), then ${\rm H}_{1}(S;{\mathbb Z}) \cong {\mathbb Z}^{2g+n-1}$. If $S$ is of infinite type, then ${\rm H}_{1}(S;{\mathbb Z})$ is a free abelian group of infinite rank.\qed
\end{rema}

We fix a (minimal) set $\{\gamma_{s}\}_{s \in E}$ of generators of $M_{\infty}= {\rm H}_{1}(S;{\mathbb Z})$ (i.e., a set of generators such that the only relations are consequences of commutators). For instance, if $S$ is of genus $g$ and has $n$ punctures, then such a minimal set of generators consists of $2g+n-1$ elements, say $a_{1},\ldots, a_{g}, b_{1}, \ldots, b_{g}, c_{1}, \ldots, c_{n-1}$ (if $n=0$, then we delete the generators $c_{j}$).

%%%%%%%%%%%%
\subsection{Proof of Part (1): The cyclic case}
Let $L=\langle \phi \rangle \cong {\mathbb Z}_{l}$, $k \in \{2,\ldots\}\cup\{\infty\}$, and $\psi \in \widetilde{L}_{k}$ such that $\theta_{k}(\psi)=\phi$. The others elements of $\widetilde{L}_{k}$ which are send to $\phi$ under $\theta_{k}$ are all of the form $\psi_{\alpha}=\alpha \circ \psi$, where $\alpha \in M_{k}$. The question is if one may find $\alpha \in M_{k}$ such that $\psi _{\alpha}$ has the same order as $\phi$. 

If $k \geq 2$ is finite and $l$ and $k$ are relatively primes, then the result follows from the Schur-Zassenhaus theorem. So, let us now assume that $\phi$ has fixed points (and we admit the possibility that $k=\infty$). Let $p \in \widetilde{S}_{k}$ be such that $\pi_{k}(p)$ is fixed by $\phi$. Let $u \in \widetilde{L}_{k}$ be such that $\theta_{k}(u)=\phi$. Then, $\pi_{k}(u(p))=\phi(\pi_{k}(p))=\pi_{k}(p)$. So, there is some $\alpha \in M_{k}$ such that $\alpha(u(p))=p$. Set $\psi=\alpha \circ u \in \widetilde{L}_{k}$. In this case, $\theta_{k}(\psi)=\theta_{k}(\alpha \circ u)=\theta_{k}(u)=\phi$, and $\psi(p)=p$. 
As $\phi^{l}=1$, then $\psi^{l} \in M_{k}$. Since $\Psi$ has fixed points and $M_{k}$ acts freely on $\widetilde{S}_{k}$, it follows that $\psi^{l}=1$.
\qed

\begin{rema}
In the case that $\phi$ either has no fixed points (or if its order is not relatively prime to $k$ for the case $k$ finite), it might be that there is no $\psi \in \widetilde{L}_{k}$ of the same order as $\phi$ with $\theta_{k}(\psi)=\phi$ (see Section \ref{Sec:noramificado}). In the following example, we consider a case when $\phi$ has infinite order.\qed
\end{rema}

\begin{example}
Let $S$ be a Riemann surface that is homeomorphic to the Jacob ladder (the unique, up to homeomorphisms, surface of infinite genus with exactly two ends, each one accumulated by genus) admitting a conformal automorphism $\phi$ of infinite order (which is repelling from one end and attracting to the other end). Note that $\phi$ has no fixed points on $S$. In this case, ${\rm H}_{1}(S;{\mathbb Z})$ has a set of generators $\{\delta\} \cup \{a_{j}, b_{j}: j \in {\mathbb Z}\}$,  where 
(i) $a_{j}, b_{j}$ are represented by two oriented simple loops intersecting exactly at one point, whose commutator $[a_{j},b_{j}]$ is represented by a simple loop that cut-off a torus, and 
$\phi(a_{j})=a_{j+1}$, $\phi(b_{j})=b_{j+1}$, and 
(ii) $\delta$ is represented by an oriented simple loop that separates one end from the other and such that the loops $\delta, [a_{0},b_{0}]$ and $\phi(\delta)$ bounds a pant. 
The action of $\phi$ on homology is given by $\phi_{*}(\delta)=\delta$,  $\phi_{*}(a_{j})=a_{j+1}$, $\phi_{*}(b_{j})=b_{j+1}$, and
$\widetilde{L}_{\infty}=M_{\infty} \rtimes \langle \psi \rangle \cong {\rm H}_{1}(S;{\mathbb Z}) \rtimes {\mathbb Z}$, where $\psi \in {\rm Aut}(\widetilde{S}_{\infty})$ is such that $\theta_{\infty}(\psi)=\phi$.\qed
\end{example}

%%%%%%%%%%%%%%%
\subsection{Proof of Part (2): The general case}\label{Sec:251}
As above, $k \in \{2,3,\ldots\} \cup \{\infty\}$, and $L$ is a (necessarily countable or finite) group of conformal automorphisms of a non-exceptional Riemann surface $S$.

Let us fix a presentation ${\mathcal P}$ of $L$, say with a set of generators $\{\phi_{j}\}_{j \in J}$ and a set of relations $\{R_{i}(\{\phi_{j}\})=1\}_{i \in I}$ (where $I$ and $J$ are either finite of countable infinite sets).

Fix some choices for 
$\psi_{j} \in \widetilde{L}_{k}$ such that $\theta_{k}(\psi_{j})=\phi_{j}$, for $j \in J$, and let 
$m_{i} \in M_{k}$ be such that $R_{i}(\{\psi_{j}\})=m_{i}$, for $i \in I$.

Let $M_{k,{\mathcal P},\{\psi_{j}\}_{j \in J}}$ be the minimal $\widetilde{L}_{k}$-invariant subgroup of $M_{k}$ containing the elements $m_{i}$, where $i \in I$, determined by the previous fixed presentation ${\mathcal P}$ of $L$.
Note that $M_{k,{\mathcal P},\{\psi_{j}\}_{j \in J}}$ is a normal subgroup of the subgroup $\langle \psi_{j}: j \in J \rangle \leq \widetilde{L}_{k}$, and $\theta_{k}(\langle \psi_{j}: j \in J \rangle)=L$.

The subgroup $M_{k,{\mathcal P},\{\psi_{j}\}_{j \in J}}$ determines a Riemann surface $W_{k,{\mathcal P},\{\psi_{j}\}_{j \in J}}=\widetilde{S}/M_{k,{\mathcal P},\{\psi_{j}\}_{j \in J}}$, 
an abelian Galois covering $\pi_{k,{\mathcal P},\{\psi_{j}\}_{j \in J}}:\widetilde{S}_{k} \to W_{k,{\mathcal P},\{\psi_{j}\}_{j \in J}}$, with deck group  $M_{k,{\mathcal P},\{\psi_{j}\}_{j \in J}}$, and an abelian Galois covering  $\eta_{k,{\mathcal P},\{\psi_{j}\}_{j \in J}}:W_{k,{\mathcal P},\{\psi_{j}\}_{j \in J}} \to S$, with deck group ${\mathcal K}_{k,{\mathcal P},\{\psi_{j}\}_{j \in J}}=M_{k}/ M_{k,{\mathcal P},\{\psi_{j}\}_{j \in J}}$, such that $\pi_{k}=\eta_{k,{\mathcal P},\{\psi_{j}\}_{j \in J}} \circ \pi_{k,{\mathcal P},\{\psi_{j}\}_{j \in J}}$.

Since  $M_{k,{\mathcal P},\{\psi_{j}\}_{j \in J}}$ is $\widetilde{L}_{k}$-invariant, the group $\widetilde{L}_{k}$ induces a group ${\mathcal G}_{k,{\mathcal P},\{\psi_{j}\}_{j \in J}}:=\widetilde{L}_{k}/M_{k,{\mathcal P},\{\psi_{j}\}_{j \in J}}$
of conformal automorphism of $W_{k,{\mathcal P},\{\psi_{j}\}_{j \in J}}$ such that $W_{k,{\mathcal P},\{\psi_{j}\}_{j \in J}}/{\mathcal G}_{k,{\mathcal P},\{\psi_{j}\}_{j \in J}}=S/L$. 

The group $\langle \psi_{j}: j \in J \rangle$ induces, under $\pi_{k}$, the group $L$, and, under $\pi_{k,{\mathcal P},\{\psi_{j}\}_{j \in J}}$,  the group $\hat{L}$.
By the choice of the elements $m_{i}$ and the subgroup $M_{k,{\mathcal P},\{\psi_{j}\}_{j \in J}}$, we also have that $\hat{L} \cong L$. This permits us to observe that ${\mathcal G}_{k,{\mathcal P},\{\psi_{j}\}_{j \in J}} = {\mathcal K}_{k,{\mathcal P},\{\psi_{j}\}_{j \in J}} \rtimes \hat{L}.$\qed

%%%%%%%%%%%
%%%%%%%%%%
\section{Computing a presentation of $\widetilde{L}_{k}$}\label{Sec:presentationgeneral}
Let $S$ be a non-exceptional Riemann surface and $L \leq {\rm Aut}(S)$. Below, we describe how to compute a presentation of $\widetilde{L}_{k}$ starting from a presentation for $L$. Then, we make this explicit for the case when $S$ is a closed Riemann surface as we will need it for constructing our examples.

%%%%%%%%%%%%%
\subsection{The induced action on homology}
Each element $\phi \in L$ induces an automorphism $\phi_{*}$ of ${\rm H}_{1}(S;{\mathbb Z})$. 
If $\psi \in \widetilde{L}_{\infty}$ is such that $\theta_{\infty}(\psi)=\phi \in L$, then 
the conjugation action of $\psi$ on $M_{\infty}$ reads as follows:
$$\begin{array}{l}
\psi \circ x \circ \psi^{-1}=\phi_{*}(x), \; x \in M_{\infty}.
\end{array}
$$

If $\widetilde{\phi} \in \widetilde{L}_{k}$ is such that $\theta_{k}(\widetilde{\phi})=\phi$, then the above permits us to observe that 
the conjugation action of $\widetilde{\phi}$ on $M_{k}$ reads as follows:
$$\begin{array}{l}
\widetilde{\phi} \circ y \circ \widetilde{\phi}^{-1}=\widetilde{\theta}_{k}(\psi \circ x \circ \psi^{-1})=\widetilde{\theta}_{k}(\phi_{*}(x)), \; x \in M_{\infty}, \; \widetilde{\theta}_{k}(x)=y.
\end{array}
$$

Let us consider a (minimal) set $\{\gamma_{s}\}_{s \in E}$ of generators of $M_{\infty}= {\rm H}_{1}(S;{\mathbb Z})$, a 
presentation ${\mathcal P}=\langle \{\phi\}_{j \in J}: \{R_{i}(\{\phi_{j}\}_{j \in J})\}_{i \in I} \rangle$ of $L$, and choices $\{\psi_{j}\}_{j \in J} \subset \widetilde{L}_{k}$ such that $\theta_{k}(\psi_{j})=\phi_{j}$.

%%%%%%%%%%%%%%%%
\subsection{Computing a presentation of $\widetilde{L}_{k}$}
The above observations permit us to obtain the following presentations:
$$\begin{array}{l}
\widetilde{L}_{\infty}=\langle \{\gamma_{s}\}_{s \in E}, \{\psi_{j}\}_{j \in J}: [\gamma_{j_{1}},\gamma_{j_{2}}]=1, \; (j_{1}, j_{2} \in E);\\
R_{i}(\{\psi_{j}\})=m_{i}, (i \in I); \; 
\psi_{j} \circ \gamma_{s} \circ \psi_{j}^{-1}=\phi_{*}(\gamma_{s}), (s \in E, \; j \in J)
\rangle.
\end{array}
$$
and, for $k \geq 2$ finite,
$$\begin{array}{l}
\widetilde{L}_{k}=\langle \{\gamma_{s}\}_{s \in E}, \{\psi_{j}\}_{j \in J)}: \gamma_{s}^{k}=1, \; (s \in E);\; 
[\gamma_{j_{1}},\gamma_{j_{2}}]=1, \; (j_{1}, j_{2} \in E);\\
R_{i}(\{\psi_{j}\})=m_{i}, (i \in I);\;
\psi_{j} \circ \gamma_{s} \circ \psi_{j}^{-1}=\phi_{*}(\gamma_{s}), (s \in E, \; j \in J)
\rangle.
\end{array}
$$

%%%%%%%%%%%%%
%%%%%%%%%%%%%
\subsection{The closed Riemann surface case}\label{Sec:casocerrado}
Let us make explicit the above in the case that $S$ is a closed Riemann surface of genus $g \geq 2$. 
We fix a set of $2g$ generators of $M_{\infty}$ (which is not assumed to be symplectic)
$$M_{\infty}=\langle a_{1},\ldots,a_{g}, b_{1}, \ldots, b_{g} \rangle = {\rm H}_{1}(S;{\mathbb Z}).$$

In this case, $|{\rm Aut}(S)| \leq 84(g-1)$ \cite{Hurwitz}, so $L \leq {\rm Aut}(S)$ is a finite group.
Let us consider a presentation of $L$ as
$$\begin{array}{l}
{\mathcal P}=\langle \phi_{1},\ldots,\phi_{l}: R_{1}(\phi_{1},\ldots,\phi_{l})=\cdots=R_{n}(\phi_{1},\ldots,\phi_{l})=1\rangle,
\end{array}
$$
let us fix some choices 
$\psi_{1},\ldots, \psi_{l} \in \widetilde{L}_{\infty}$ such that $\theta_{\infty}(\psi_{j})=\phi_{j}$, for $j=1,\ldots,l$, and 
let $m_{1},\ldots,m_{n} \in M_{\infty}$ be such that $R_{j}(\psi_{1},\ldots,\psi_{l})=m_{j}$.

In this case, we obtain the following presentations:
$$\begin{array}{l}
\widetilde{L}_{\infty}=\langle a_{1},\ldots,a_{g}, b_{1}, \ldots, b_{g},  \psi_{1},\ldots,\psi_{l}: [a_{i},a_{j}]=[a_{i},b_{j}]=[b_{i},b_{j}]=1, \; (i,j \in \{1,\ldots,g\});\\
R_{1}(\psi_{1},\ldots,\psi_{l})=m_{1}, \ldots, R_{n}(\psi_{1},\ldots,\psi_{l})=m_{n};\\
\psi_{j} \circ a_{i} \circ \psi_{j}^{-1}=\phi_{*}(a_{i});\;  \psi_{j} \circ b_{i} \circ \psi_{j}^{-1}=\phi_{*}(b_{i}), i \in \{1,\ldots,g\}, \; j \in \{1,\ldots,l\}
\rangle.
\end{array}
$$
and, for $k \geq 2$ finite, 
$$\begin{array}{l}
\widetilde{L}_{k}=\langle a_{1},\ldots,a_{g}, b_{1}, \ldots, b_{g},  \psi_{1},\ldots,\psi_{l}:\\
a_{1}^{k}=\cdots=a_{g}^{k}=b_{1}^{k}=\cdots=b_{g}^{k}=1;
[a_{i},a_{j}]=[a_{i},b_{j}]=[b_{i},b_{j}]=1, \; (i,j \in \{1,\ldots,g\});\\
R_{1}(\psi_{1},\ldots,\psi_{l})=m_{1}, \ldots, R_{n}(\psi_{1},\ldots,\psi_{l})=m_{n};\\
\psi_{j} \circ a_{i} \circ \psi_{j}^{-1}=\phi_{*}(a_{i});\;  \psi_{j} \circ b_{i} \circ \psi_{j}^{-1}=\phi_{*}(b_{i}), i \in \{1,\ldots,g\}, \; j \in \{1,\ldots,l\}
\rangle.
\end{array}
$$

\begin{example}\label{ej0}
Let $S$ be a closed Riemann surface of genus three admitting a conformal automorphism $\phi$ of order four with no fixed points, but $\phi^{2}$ having exactly four fixed points. So, $S/L$ has genus one and exactly two cone points of order two. In this situation, we consider the presentation ${\mathcal P}=\langle \phi: \phi^{4}=1\rangle$.
There is a symplect basis $a_{1}, a_{2}, a_{3}, b_{1}, b_{2}, b_{3}$ for $M_{\infty}$ such that the action of $\phi$ is given by:
$$\begin{array}{l}
\phi_{*}(a_{1})=a_{2}, \; \phi_{*}(a_{2})=a_{1}^{-1}, \; \phi_{*}(a_{3})=a_{3}, \; 
\phi_{*}(b_{1})=b_{2}, \; \phi_{*}(b_{2})=b_{1}^{-1}, \; \phi_{*}(b_{3})=b_{3}.
\end{array}
$$

There is a $\psi \in \widetilde{L}_{k}$ such that $\theta_{k}(\psi)=\phi$ and $\psi^{4}=b_{3}$. So, in this case, 
$$\begin{array}{l}
\widetilde{L}_{k}=\langle a_{1},a_{2},a_{3},b_{1},b_{2},b_{3},\psi: a_{1}^{k}=a_{2}^{k}=a_{3}^{k}=b_{1}^{k}=b_{2}^{k}=b_{3}^{k}=1, \; \psi^{4}=b_{3},\\
\; [a_{i},a_{j}]=[a_{i},b_{j}]=[b_{i},b_{j}]=1 \; (i,j=1,2,3),\\
\psi \circ a_{1} \circ \psi^{-1}=a_{2}, \; \psi \circ a_{2} \circ \psi^{-1}=a_{1}^{-1},\; \psi \circ a_{3} \circ \psi^{-1}=a_{3},\\
\psi \circ b_{1} \circ \psi^{-1}=b_{2}, \; \psi \circ b_{2} \circ \psi^{-1}=b_{1}^{-1},\; \psi \circ b_{3} \circ \psi^{-1}=b_{3}
\rangle,\\
M_{k,{\mathcal P},\{\psi\}}=\langle b_{3} \rangle \cong {\mathbb Z}_{k},\\
{\mathcal K}_{k,{\mathcal P},\{\psi\}}=\langle \hat{a}_{1},\hat{a}_{2},\hat{a}_{5},\hat{b}_{1},\hat{b}_{2} \rangle \cong {\mathbb Z}^{5}_{k},\\
{\mathcal G}_{k,{\mathcal P},\{\psi\}}=\langle \hat{a}_{1},\hat{a}_{2},\hat{a}_{3},\hat{b}_{1},\hat{b}_{2},\psi: \hat{a}_{1}^{k}=\hat{a}_{2}^{k}=\hat{a}_{3}^{k}=\hat{b}_{1}^{k}=\hat{b}_{2}^{k}=\psi^{4}=1,\\
\; [\hat{a}_{i},\hat{a}_{j}]=[\hat{a}_{i},\hat{b}_{j}]=[\hat{b}_{i},\hat{b}_{j}]=1 \; (i=1,2,3; j=1,2),\\
\psi \circ \hat{a}_{1} \circ \psi^{-1}=\hat{a}_{2}, \; \psi \circ \hat{a}_{2} \circ \psi^{-1}=\hat{a}_{1}^{-1},\; \psi \circ \hat{a}_{3} \circ \psi^{-1}=\hat{a}_{3},\\
\psi \circ \hat{b}_{1} \circ \psi^{-1}=\hat{b}_{2}, \; \psi \circ \hat{b}_{2} \circ \psi^{-1}=\hat{b}_{1}^{-1}\rangle = \langle \hat{a}_{1},\hat{a}_{2},\hat{a}_{3},\hat{b}_{1},\hat{b}_{2}\rangle \rtimes \langle \psi \rangle \cong {\mathbb Z}_{k}^{5} \rtimes {\mathbb Z}_{4}.
\end{array}
$$

Any other choice for $\rho \in \widetilde{L}_{k}$ such that $\theta_{k}(\rho)=\phi$ is of the form 
$$\rho=\psi \circ a_{1}^{l_{1}} \circ a_{2}^{l_{2}} \circ a_{3}^{l_{3}} \circ b_{1}^{n_{1}} \circ b_{2}^{n_{2}} \circ b_{3}^{n_{3}}.$$ 
In this case,
$\rho^{4}=a_{3}^{4l_{3}} \circ b_{3}^{4n_{3}+1}$, so 
$M_{k,{\mathcal P},\{\rho\}}=\langle a_{3}^{4l_{3}} \circ b_{3}^{4n_{3}+1} \rangle$.

If we take $k=6$, $l_{3}=0$ and $n_{3}=2$, then $\rho=\psi \circ b_{3}^{2}$, $\rho^{4}=b_{3}^{3}$,
$M_{6,{\mathcal P},\{\rho\}}=\langle b_{3}^{3} \rangle \cong {\mathbb Z}_{2}$, 
${\mathcal K}_{k,{\mathcal P},\{\psi\}}=\langle \tilde{a}_{1},\tilde{a}_{2},\tilde{a}_{5},\tilde{b}_{1},\tilde{b}_{2},\tilde{b}_{3} \rangle \cong {\mathbb Z}_{6}^{5} \times {\mathbb Z}_{2}$, and 
$$\begin{array}{l}
{\mathcal G}_{6,{\mathcal P},\{\rho\}}=\langle \tilde{a}_{1},\tilde{a}_{2},\tilde{a}_{5},\tilde{b}_{1},\tilde{b}_{2},\tilde{b}_{3},\rho: 
\tilde{a}_{1}^{6}=\tilde{a}_{2}^{6}=\tilde{a}_{3}^{6}=\tilde{b}_{1}^{6}=\tilde{b}_{2}^{6}=\tilde{b}_{3}^{3}=\rho^{4}=1,\\
\; [\tilde{a}_{i},\tilde{a}_{j}]=[\tilde{a}_{i},\tilde{b}_{j}]=[\tilde{b}_{i},\tilde{b}_{j}]=1 \; (i,j=1,2,3),\;
\rho \circ \tilde{a}_{1} \circ \rho^{-1}=\tilde{a}_{2}, \; \rho \circ \tilde{a}_{2} \circ \rho^{-1}=\tilde{a}_{1}^{-1},\\ 
\rho \circ \tilde{a}_{3} \circ \rho^{-1}=\tilde{a}_{3},\;
\rho \circ \tilde{b}_{1} \circ \rho^{-1}=\tilde{b}_{2}, \; \rho \circ \tilde{b}_{2} \circ \rho^{-1}=\tilde{b}_{1}^{-1}, \; \rho \circ \tilde{b}_{3} \circ \rho^{-1}=\tilde{b}_{3} \rangle=\\
=\langle \tilde{a}_{1},\tilde{a}_{2},\tilde{a}_{5},\tilde{b}_{1},\tilde{b}_{2},\tilde{b}_{3} \rangle \rtimes \langle \rho \rangle \cong ({\mathbb Z}_{6}^{5} \times {\mathbb Z}_{2}) \rtimes {\mathbb Z}_{4}.
\end{array}
$$
\qed
\end{example}

%%%%%%%%%%%%%%%%
\subsection{Gilman's adapted basis}
As seen above, to obtain the explicit form of the presentation of $\widetilde{L}_{\infty}$, we need to indicate the values of $\phi_{*}(x)$, for every $x \in {\rm H}_{1}(S;{\mathbb Z})$. 
Let us consider the case when $L=\langle \phi \rangle \cong {\mathbb Z}_{p}$, where $p \geq 2$ is a prime integer. In \cite{Gilman}, J. Gilman provided the existence of a nice set of generators of ${\rm A}_{1}(S;{\mathbb Z})$ for the action of $L$.

A basis ${\mathcal B}$ for ${\rm H}_{1}(S;{\mathbb Z})$ is called adapted to $L$ if for every $x \in {\mathcal B}$ either
\begin{enumerate}[leftmargin=*,align=left]
\item $\phi^{j}_{*}(x) \in {\mathcal B}$, for all $j=0,\ldots,p-1$, or
\item $\phi^{j}_{*}(x)$ is in ${\mathcal B}$ for all $j=0,\ldots,p-2$ and $\phi^{p-1}(x) =-\sum_{r=0}^{p-2} \phi^{r}_{*}(x)$, or
\item $x=\phi^{k}_{*}(y)$, where $0 \leq k \leq p-2$ and $y \in {\mathcal B}$ satisfies (2).
\end{enumerate}

\begin{theo}[Gilman's existence of adapted basis \cite{Gilman}]
If $L=\langle \phi \rangle \cong {\mathbb Z}_{p}$, $p \geq 2$ is a prime integer, then there exists a homology basis adapted to $L$.
\end{theo}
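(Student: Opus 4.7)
The plan is to produce an adapted basis by decomposing $M_\infty=\mathrm{H}_1(S;\mathbb{Z})$ as a $\mathbb{Z}[L]$-module, with $L=\langle \phi_*\rangle$ acting by the induced automorphism, into summands of three standard types: free modules $\mathbb{Z}[L]\cong\mathbb{Z}[t]/(t^p-1)$ (each contributing a length-$p$ orbit of independent basis vectors, giving case (1)), cyclotomic modules $\mathbb{Z}[L]/\langle 1+t+\cdots+t^{p-1}\rangle\cong\mathbb{Z}[\zeta_p]$ (each contributing a trace-zero orbit of $p-1$ basis vectors, giving cases (2) and (3)), and trivial modules $\mathbb{Z}$ (each contributing a $\phi_*$-fixed basis vector, satisfying (1) vacuously because all $\phi_*^j(x)$ coincide with $x$). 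Once such a decomposition is in hand, selecting one $\mathbb{Z}[L]$-generator of each summand and listing its $\phi_*$-orbit (in the truncated form for cyclotomic summands) furnishes the required basis.

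The rational skeleton of the decomposition follows from the semisimplicity of $\mathbb{Q}[L]$: since $p$ is prime, the only irreducible $\mathbb{Q}[L]$-modules are $\mathbb{Q}$ and $\mathbb{Q}(\zeta_p)$, so $\mathrm{H}_1(S;\mathbb{Q})$ splits canonically as $V_0\oplus V_1$, where $V_0=\mathrm{H}_1(S;\mathbb{Q})^L$. Riemann--Hurwitz applied to $\pi:S\to S/L$ gives $\dim_{\mathbb{Q}} V_0=2g_0$, with $g_0$ the genus of $S/L$, and consequently $\dim_{\mathbb{Q}} V_1=2g-2g_0$ is a nonnegative multiple of $p-1$. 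To lift this to an integral $\mathbb{Z}[L]$-module decomposition, I would construct a $\phi$-equivariant polygonal (or handle) decomposition of $S$ from a fundamental domain for the inclusion $\Gamma\subset\widehat{\Gamma}$ of Fuchsian groups uniformizing $S$ and the orbifold $S/L$. The associated cellular chain complex is a complex of finitely generated $\mathbb{Z}[L]$-modules with $\phi$-equivariant boundary maps, and the required summands are read off by tracing $\phi_*$-orbits of edge cycles.

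The main obstacle is ruling out the more exotic indecomposable $\mathbb{Z}[L]$-lattices permitted in general by the Diederichsen--Reiner classification, namely non-principal fractional ideals of $\mathbb{Z}[\zeta_p]$ and nontrivial extensions of trivial by cyclotomic modules. Two complementary devices should handle this: first, the $\phi$-equivariant symplectic intersection pairing on $M_\infty$, which restricts nondegenerately to each of $V_0$ and $V_1$ (isotypic components being mutually orthogonal by an averaging argument) and forces the cyclotomic lattice to be self-dual under the trace form of $\mathbb{Z}[\zeta_p]$, hence free; second, the combinatorics of the equivariant polygon, in which every $\phi$-orbit of an edge cycle is manifestly either a free orbit of rank $p$ or a trace-zero orbit of rank $p-1$, leaving no room for exotic mixing. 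A delicate transitional point, which I expect to require the most care, is the passage from the rational to the integral decomposition near the branch locus of $\pi$, where a judicious choice of generators on each sheet of the equivariant fundamental domain is needed in order to avoid extension classes that would cause a basis element to fall outside of conditions (1)--(3).
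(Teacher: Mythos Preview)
The paper does not prove this statement at all: it is quoted verbatim as a result of Gilman \cite{Gilman} and used as a black box to compute the presentations that follow. So there is no ``paper's own proof'' to compare against; any argument you supply is necessarily going beyond the text.

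That said, your sketch contains a genuine gap. The step ``self-dual under the trace form of $\mathbb{Z}[\zeta_p]$, hence free'' does not follow. For a fractional ideal $\mathfrak{a}\subset\mathbb{Z}[\zeta_p]$, the $\mathbb{Z}$-dual with respect to the trace pairing is $\mathfrak{d}^{-1}\mathfrak{a}^{-1}$, where $\mathfrak{d}=(1-\zeta_p)^{p-2}$ is the different, which is principal. Thus self-duality only yields $[\mathfrak{a}]^2=1$ in the ideal class group, not $[\mathfrak{a}]=1$; for primes $p$ with $2$-torsion in the class group (the smallest is $p=29$) this leaves non-principal possibilities untouched. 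The symplectic unimodularity argument therefore cannot by itself exclude the exotic Diederichsen--Reiner summands you correctly flag as the obstacle. The same caveat applies to the integral splitting $M_\infty=(M_\infty\cap V_0)\oplus(M_\infty\cap V_1)$: over $\mathbb{Q}$ the isotypic decomposition is automatic, but integrally the possible non-split extensions of $\mathbb{Z}$ by an ideal are exactly what prevents this, and orthogonality under the intersection form does not force the lattice to split.

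Your second device---building the basis directly from a $\phi$-equivariant cell or handle decomposition coming from a fundamental domain for $\Gamma\lhd\widehat{\Gamma}$---is the one that actually carries the weight, and is essentially Gilman's original method. If you pursue that line, the module-theoretic language becomes bookkeeping rather than the engine of the proof: one exhibits explicit cycles whose $\phi_*$-orbits are visibly of types (1) or (2)--(3), and checks by an Euler-characteristic or rank count that they span. I would drop the symplectic shortcut and make the combinatorial construction the main argument.
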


The above permits us to find explicit presentations for $\widetilde{L}_{k}$.
In the next sections, we describe some explicit examples where $S$ is a closed Riemann surface.

%%%%%%%%%%%%%%%
%%%%%%%%%%%%%%
\section{An application to Galois closures}\label{Sec:ejemplito}
Let $S$ be a non-exceptional Riemann surface (either of finite type or infinite type), $L \leq {\rm Aut}(S)$ and $\widetilde{\varphi}:S \to S/L$ be a Galois (possible branched) covering with deck group $L$.

Let $U$ be a Riemann surface and $\pi_{S}:U \to S$ be a Galois (unbranched) covering, with deck group an abelian group $\widehat{\mathcal A}$. The Galois closure of $\widetilde{\varphi} \circ \pi_{S}:U \to S/L$ is a minimal Galois (branched) covering $\widetilde{\psi}=:Z \to S/L$, say with deck group ${\mathcal G}$, that factors through $\widetilde{\varphi} \circ \pi_{S}$.

In terms of the $k$-homology cover, the above is described as follows.
Let $k \geq 2$ be any multiple of the exponent of $\widehat{\mathcal A}$ (or $k=\infty$). Then there exists a subgroup $N_{1} \leq M_{k}$ such that $U=\widetilde{S}_{k}/N_{1}$ and $\pi_{S}$ is induced by the inclusion of $N_{1}$ in $M_{k}$ (so, $\widehat{\mathcal A} \cong M_{k}/N_{1}$). 

Let $N_{2}={\rm Core}_{\widetilde{L}_{k}}(N_{1})$ be the core of $N_{1}$ in $\widetilde{L}_{k}$; so $N_{2}$ is the maximal $\widetilde{L}_{k}$-invariant subgroup of $N_{1}$. Then $Z=\widetilde{S}_{k}/N_{2}$, $\widetilde{\psi}$ is induced by the inclusion of $N_{2}$ in $\widetilde{L}_{k}$, and  ${\mathcal G}=\widetilde{L}_{k}/N_{2}$.

The group ${\mathcal G}$ contains the abelian subgroups ${\mathcal U}=N_{1}/N_{2} \lhd {\mathcal K}=M_{k}/N_{2}$. In this case, $U=Z/{\mathcal U}$ and $S=Z/{\mathcal K}$. 
The surjective homomorphism $\theta_{k}:\widetilde{L}_{k} \to L$ induces a surjective homomorphism $\theta:{\mathcal G} \to L$ making the below a short exact sequence
$$1 \to {\mathcal K}  \to {\mathcal G} \stackrel{\theta}{\to} L \to 1.$$

In many cases, ${\mathcal G} \cong {\mathcal K} \rtimes L$ (\cite{B-F, CHR, Diaz-Donagi, G-Sh, Kanev, Recillas, Vetro}). As a consequence of Theorem \ref{teo6}, we have the following.

\begin{coro}\label{coro3}
Let $S$ be a non-exceptional Riemann surface and $\widetilde{\varphi}:S \to T$ be a (branched) Galois covering, with deck group $L\leq {\rm Aut}(S)$.

Let $\pi_{S}:U \to S$ be an unbranched abelian covering, with abelian deck group $\widehat{\mathcal A}$. Let $k \geq 2$ be the exponent of $\widehat{\mathcal A}$. Let $N_{1} \leq M_{k}$ be such that $U=\widetilde{S}_{k}/N_{1}$, $N_{2}={\rm Core}_{\widetilde{L}_{k}}(N_{1})$, $Z=\widetilde{S}_{k}/N_{2}$ and ${\mathcal K}=M_{k}/N_{2}$. 

Then the Galois closure of $\widetilde{\varphi} \circ \pi_{S}$ is given by the Galois covering $\widetilde{\psi}:Z \to L$, induced by the inclusion of $N_{2}$ in $\widetilde{L}_{k}$, and its 
corresponding Galois deck group is ${\mathcal G}=\widetilde{L}_{k}/N_{2}$.

\begin{enumerate}[leftmargin=*,align=left]
\item If $L=\langle \phi \rangle \cong {\mathbb Z}_{l}$ and either (i) $\phi$ has fixed points, or (ii) $l$ is relatively prime to $k$, then  ${\mathcal G} \cong {\mathcal K} \rtimes {\mathbb Z}_{l}$.

\item Assume that there is a presentation ${\mathcal P}=\langle \{\phi\}_{j \in J}: \{R_{i}(\{\phi_{j}\}_{j \in J})\}_{i \in I} \rangle$ of $L$, and there are choices $\{\psi_{j}\}_{j \in J} \subset \widetilde{L}_{k}$ such that $\theta_{k}(\psi_{j})=\phi_{j}$, and
$M_{k,{\mathcal P},\{\psi_{j}\}_{j \in J}} \leq N_{1}$. Then, 
$$Z=W_{k,{\mathcal P},\{\psi_{j}\}_{j \in J}}/\widehat{N}_{2}, \quad U=W_{k,{\mathcal P},\{\psi_{j}\}_{j \in J}}/\widehat{N}_{1},\, \mbox{and} \;
{\mathcal G} \cong {\mathcal K} \rtimes L,$$
where $\widehat{N}_{j}=N_{j}/M_{k,{\mathcal P},\{\psi_{j}\}_{j \in J}}$, for $j=1,2$.
\end{enumerate}
\end{coro}

%%%%%%%%%%%
\subsection{\bf A remark}
Let $p \geq 3$ be a prime integer, $S$ be a closed Riemann surface of genus $g \geq 2$, $L \leq {\rm Aut}(S)$ isomorphic to a subgroup of index $p$ of ${\mathfrak A}_{p}$ (the alternating group in $p$ letters), and $S/L=\widehat{\mathbb C}$ of signature $(0;p,\stackrel{n+1}{\ldots},p)$, where $n \geq 2$. Let $\widetilde{\varphi}:S \to \widehat{\mathbb C}$ be a branched Galois covering with deck group $L$. Let $q \neq p$ another prime integer and let us consider the $q$-homology cover $\widetilde{S}_{q}$ together its group $M_{q} \cong {\mathbb Z}_{q}^{2g}$ of conformal automorphisms such that $S=\widetilde{S}_{q}/M_{q}$. 
If ${\mathbb Z}_{q}^{2g-1} \cong N_{1} \leq M_{q}$, then the closed Riemann surface $U=\widetilde{S}_{q}/N_{1}$ admits $\widehat{\mathcal A}=M_{q}/N_{1} \cong {\mathbb Z}_{q}$ as a group of conformal automorphisms such that $S=U/\widehat{\mathcal A}$; let $\pi_{S}:U \to S$ be an abelian Galois covering with deck group $\widehat{\mathcal A}$. 
If $N_{2}={\rm Core}_{\widetilde{L}_{q}}(N_{1})$, then the closed Riemann surface $Z=\widetilde{S}_{q}/N_{2}$ admits ${\mathcal G}=\widetilde{L}_{q}/N_{2}$ as a group of conformal automorphisms, 
such that $Z/{\mathcal G}=S/L$; let $\widetilde{\psi}:Z \to \widehat{\mathbb C}$ be a Galois covering with deck group ${\mathcal G}$.
There is also a Galois covering $\phi_{S}:Z \to S$, with deck group ${\mathcal K}=M_{q}/N_{2} \cong {\mathbb Z}_{q}^{s}$, for suitable $s \geq 0$, such that $\widetilde{\psi}=\widetilde{\varphi} \circ \phi_{S}$. The Galois closure of $\widetilde{\varphi} \circ \pi_{S}$ is given by $\widetilde{\psi}$. If $q$ is relatively prime with the order of $L$ (for instance, when $q>p$), it follows, from the Schur-Zassenhaus theorem, that ${\mathcal G} \cong {\mathcal K} \rtimes L$. If $q$ divides the order of $L$, then it might happen that ${\mathcal G}$ is not isomorphic to ${\mathcal K} \rtimes L$.
Now, assume there is a subgroup $K \leq L$, of index $p$ and acting freely on $S$ such that ${\rm Core}_{L}(K)=\{1\}$ (under this assumption, it can be seen that $L$ is necessarily a simple group). 
Let $\widetilde{K}_{q}=\theta_{q}^{-1}(K) \leq \widetilde{L}_{q}$, 
$Q:S \to X=S/K$ be a Galois covering with deck group $K$, and let $\varphi:X \to \widehat{\mathbb C}$ be a branched covering such that $\widetilde{\varphi}=\varphi \circ Q$.
Assume also that $N_{1}$ is a normal subgroup of $\widetilde{K}_{q}$. This asserts that $\widetilde{K}_{q}/N_{1}$ is a group of automorphisms of $U$ such that $U/(\widetilde{K}_{q}/N_{1}) =X$. The group $\widetilde{K}_{q}/N_{1}$ contains, as a normal subgroup, the abelian group $\widehat{\mathcal A}$, and $K \cong (\widetilde{K}_{q}/N_{1})/\widehat{\mathcal A}$.
The Galois covering $\beta=Q \circ \pi_{S}:U \to X$ has deck group $\widetilde{K}_{q}/N_{1}$. Let us also assume that $\widetilde{K}_{q}/N_{1}=\hat{K} \times \widehat{\mathcal A}$, where $\hat{K} \cong K$. Let $Y=U/\hat{K}$ and  $\pi_{Y}:U \to Y$ be a Galois covering with deck group $\hat{K}$. Let $P:Y \to X$ be such that $\beta=P \circ \pi_{Y}$. Then $P$ is a Galois covering with deck group ${\mathcal A} \cong \widehat{\mathcal A}$, and $U=Y \times_{(P,Q)} S$ (the fiber product of $P$ and $Q$).
In this particular situation, in \cite{C-R}, it is asserted that ${\mathcal G} \cong {\mathcal K} \rtimes L$ (even if $q$ is not relatively prime to the order of $L$).
\qed

%%%%%%%%%%%%%%%%%%%%%%
%%%%%%%%%%%%%%%%%%%%%%
\section{Example: Cyclic unbranched covers}\label{Sec:noramificado}
Let $S$ be a closed Riemann surface of genus $g \geq 2$, and $L=\langle \phi \rangle \cong {\mathbb Z}_{m}$ be a group of conformal automorphisms acting freely on $S$. By the Riemann-Hurwitz formula, $g=m\gamma+1$, for some $\gamma \geq 1$. Let us consider the presentation ${\mathcal P}=\langle \phi: \phi^{m}=1\rangle$.
There is a symplectic basis of $M_{k}$, say $a_{1},\ldots,a_{g},b_{1},\ldots,b_{g}$,
such that action of $\phi_{*}$ is given as follows:
$$\begin{array}{l}
\phi_{*}(a_{j})=a_{j+1}, \; \phi_{*}(b_{j})=b_{j+1}, \; j=1,\ldots,g-2,\\
\phi_{*}(a_{g-1})=a_{1}, \; \phi_{*}(b_{g-1})=b_{1}, \; \phi_{*}(a_{g})=a_{g}, \; \phi_{*}(b_{g})=b_{g}.
\end{array}
$$
There exists $\psi \in \widetilde{L}_{k}$ such that $\theta_{k}(\psi)=\phi$ and $\psi^{m}=b_{g}$. So, (for $k \geq 2$ and $k=\infty$)
$$\begin{array}{l}
\widetilde{L}_{k}=\langle a_{1},\ldots,a_{g},b_{1},\ldots,b_{g},\psi:
[a_{i},a_{j}]=[a_{i},b_{j}]=[b_{i},b_{j}]=1,\\
a_{1}^{k}=\cdots=a_{g}^{k}=b_{1}^{k}=\cdots=b_{g}^{k}=1, \; (i,j =1,\ldots, g); \; \psi^{m}=b_{g};\\
\psi \circ a_{j} \circ \psi^{-1}=a_{j+1},\; \psi \circ b_{j} \circ \psi^{-1}=b_{j+1},\; (j=1,\ldots,g-2);\\
\psi \circ a_{g-1} \circ \psi^{-1}=a_{1},\; \psi \circ b_{g-1} \circ \psi^{-1}=b_{1},\; \psi \circ a_{g} \circ \psi^{-1}=a_{g}, \; \psi \circ b_{g} \circ \psi^{-1}=b_{g}
\rangle,
\end{array}
$$
where, for $k=\infty$, we delete from the above the relations $a_{j}^{k}=b_{j}^{k}=1$.
In this case, 
$$\begin{array}{l}
M_{k,{\mathcal P},\{\psi\}}=\langle b_{g}\rangle \cong {\mathbb Z}_{k},\\
{\mathcal K}_{k,{\mathcal P},\{\psi\}}=M_{k}/M_{k,{\mathcal P},\{\psi\}}=\langle \hat{a}_{1},\ldots,\hat{a}_{g},\hat{b}_{1},\ldots,\hat{b}_{g-1}\rangle \cong {\mathbb Z}_{k}^{2g-1}\\
G_{k,{\mathcal P},\{\psi\}}=\widetilde{L}_{k}/M_{k,{\mathcal P},\{\psi\}}=\langle \hat{a}_{1},\ldots,\hat{a}_{g},\hat{b}_{1},\ldots,\hat{b}_{g-1},\hat{\psi}:\\
\hat{a}_{j}^{k}=\hat{b}_{i}^{m}=\hat{\psi}^{m}=1; \; 
[\hat{a}_{i},\hat{a}_{j}]=[\hat{a}_{i},\hat{b}_{j}]=[\hat{b}_{i},\hat{b}_{j}]=1;\\
\hat{\psi} \circ \hat{a}_{j} \circ \hat{\psi}^{-1}=\hat{a}_{j+1}, \; \hat{\psi} \circ \hat{b}_{j} \circ \hat{\psi}^{-1}=\hat{b}_{j+1},\; (j=1,\ldots,g-2), \; \hat{\psi} \circ \hat{a}_{g} \circ \hat{\psi}^{-1}=\hat{a}_{g} \rangle=\\ ={\mathbb Z}_{k}^{2g-1} \rtimes {\mathbb Z}_{m}.
\end{array}
$$

\begin{lemm}\label{lemita1}
\begin{enumerate}[leftmargin=*,align=left]
\item[(1)] There exists no $\rho \in \widetilde{L}_{\infty}$ of order $m$ such that $\theta_{\infty}(\rho)=\phi$.

\item[(2)] If $k \geq 2$ is finite, then there exists $\rho \in \widetilde{L}_{k}$ of order $m$ such that $\theta_{k}(\rho)=\phi$ if and only if 
$k$ and $m$ are relatively primes. 

\item[(3)] In particular, $\widetilde{L}_{k} \cong M_{k} \rtimes L$  if and only if $k$ is finite and relatively prime to $m$. 
\end{enumerate}
\end{lemm}
\begin{proof}
Every element $\rho \in \widetilde{L}_{k}$ such that $\theta_{k}(\rho)=\phi$ is of the form $\rho=\psi \circ \alpha$, for $\alpha \in M_{k}$.
If we set $\alpha_{j}=\psi^{j} \circ \alpha \circ \psi^{-j}$, for $j=1,\ldots, m-1$, then 
$$\rho^{m}=\psi \circ (\alpha \circ \alpha_{1} \circ \cdots \circ \alpha_{m} \circ b_{g}) \circ \psi^{-1}=\alpha \circ \alpha_{1} \circ \cdots \circ \alpha_{m} \circ b_{g}.$$

So, if we write 
$\alpha=a_{1}^{l_{1}} \circ \cdots \circ a_{g}^{l_{g}} \circ b_{1}^{n_{1}} \circ \cdots \circ b_{g}^{n_{g}},$
then 
$$\rho^{m}=(a_{1} \circ \cdots \circ a_{g-1})^{l_{1}+\cdots+l_{g-1}} \circ a_{g}^{ml_{g}} \circ (b_{1} \circ \cdots \circ b_{g-1})^{n_{1}+\cdots+n_{g-1}} \circ b_{g}^{mn_{g}+1}.$$
\end{proof}

\begin{example}\label{ej1}
Let us take $m=9$ and $k=6$ in the above. Any $\hat{\psi} \in \widetilde{L}_{6}$, such that $\theta_{k}(\hat{\psi})=\phi$, is of the form $\hat{\psi}=\psi \circ \alpha$, where $\alpha \in M_{6}$. Let us consider the choice 
$\hat{\psi}=\psi \circ b_{g}$. In this case, $\hat{\psi}^{9}=b_{g}^{10}=b_{g}^{4}$. So 
${\mathbb Z}_{3} \cong M_{6,{\mathcal P},\{\hat{\psi}\}}=\langle b_{g}^{2}\rangle \neq M_{6,{\mathcal P},\{\psi\}}=\langle b_{g} \rangle \cong {\mathbb Z}_{6}$, and  
$$\begin{array}{l}
{\mathcal K}_{6,{\mathcal P},\{\hat{\psi}\}}=M_{6}/M_{6,{\mathcal P},\{\hat{\psi}\}}=\langle \hat{a}_{1},\ldots,\hat{a}_{g},\hat{b}_{1},\ldots,\hat{b}_{g-1}, \hat{b}_{g}\rangle \cong {\mathbb Z}_{6}^{2g-1} \times {\mathbb Z}_{2},\\
{\mathcal G}_{6,{\mathcal P},\{\hat{\psi}\}}=\widetilde{L}_{6}/M_{6,{\mathcal P},\{\hat{\psi}\}}=\langle \hat{a}_{1},\ldots,\hat{a}_{g},\hat{b}_{1},\ldots,\hat{b}_{g-1}, \hat{b}_{g},\hat{\psi}:\\
\hat{a}_{j}^{6}=\hat{b}_{i}^{6}=\hat{b}_{g}^{2}=1; \;\hat{\psi}^{9}=\hat{b}_{g}; \; 
[\hat{a}_{i},\hat{a}_{j}]=[\hat{a}_{i},\hat{b}_{j}]=[\hat{b}_{i},\hat{b}_{j}]=1;\\
\hat{\psi} \circ \hat{a}_{j} \circ \hat{\psi}^{-1}=\hat{a}_{j+1}, \; \hat{\psi} \circ \hat{b}_{j} \circ \hat{\psi}^{-1}=\hat{b}_{j+1},\; (j=1,\ldots,g-2), \; \hat{\psi} \circ \hat{a}_{g} \circ \hat{\psi}^{-1}=\hat{a}_{g};\\
 \hat{\psi} \circ \hat{b}_{g} \circ \hat{\psi}^{-1}=\hat{b}_{g} \rangle.
\end{array}
$$
\qed
\end{example}

%%%%%%%%%%%%%%
%\subsection{An application to Galois closures}
%%%%%%%%%%%%%%%%%%%%
\begin{example}[A Galois closure example]\label{Sec:(m,k)=(2,4)}
Let $N_{1}=\langle a_{1},\ldots,a_{g},b_{2},\ldots,b_{g-1},b_{1}b_{g}^{2}\rangle \cong {\mathbb Z}_{4}^{2g-1}$, which is a non-$\psi$-invariant subgroup 
of $M_{4}$.
Note that $M_{k,{\mathcal P},\{\psi\}}=\langle b_{g} \rangle $ is not contained in $N_{1}$. So, we cannot apply Corollary \ref{coro3} to obtain a semidirect product.
In this case, $$\widehat{\mathcal A}=M_{4}/N_{1}=\langle B_{g}: B_{g}^{4}=1\rangle \cong {\mathbb Z}_{4}.$$

The maximal $\psi$-invariant subgroup of $N_{1}$ is
$$N_{2}=N_{1} \cap \psi N_{1} \psi^{-1}=\langle a_{1},\ldots,a_{g},b_{1}^{2},b_{2}^{2},b_{3},\ldots,b_{g-1},b_{1}b_{2}b_{g}^{2}\rangle,$$
which satisfies that
$$\begin{array}{l}
{\mathcal K}=M_{4}/N_{2}=\langle B_{2},B_{g}: B_{2}^{2}=B_{g}^{4}=[B_{2},B_{g}]=1\rangle \cong {\mathbb Z}_{2} \times {\mathbb Z}_{4},\\
{\mathcal U}=N_{1}/N_{2}=\langle B_{2}: B_{2}^{2}=1\rangle \cong {\mathbb Z}_{2},\\
{\mathcal G}=\widetilde{L}_{4}/N_{2}=\langle B_{2},\Psi: B_{2}^{2}=\Psi^{8}=1, \; B_{2} \circ \Psi \circ B_{2}=\Psi^{3}\rangle  \; \mbox{(the quasidihedral group of order $16$)}.
\end{array}
$$

Let us consider the Riemann surfaces
$Z=\widetilde{S}_{4}/N_{2}, \; U=\widetilde{S}_{4}/N_{1}, \; T=S/L,$
together with the Galois coverings
$\pi_{S}:U \to S$ (whose deck group is $\widehat{\mathcal A}$), $\widetilde{\varphi}:S \to T$ (whose deck group is $L$). In this situation, $S=Z/{\mathcal K}$, and 
$\widetilde{\varphi} \circ \pi_{S}:U \to T$ is a non-Galois covering of degree $8$, whose Galois closure is $\widetilde{\psi}:Z \to T$, with deck group ${\mathcal G}$, which is not a semidirect product between ${\mathcal K}$ with ${\mathbb Z}_{2}$ (since all involutions of ${\mathcal G}$ belong to ${\mathcal K}$).
\qed
\end{example}

%%%%%%%%%%%%%%
\begin{example}\label{Carocca-Rodriguez}
Let us consider a closed Riemann surface $S$ admitting $L=\langle r,h: r^{3}=h^{2}=(r \circ h)^{5}=1\rangle \cong {\mathfrak A}_{5}$ (the alternating group of order $60$) such that $S/L$ has signature $(0;5,5,5)$. This asserts that the only elements acting with fixed points are those of order five (which are all conjugated). By the Riemann-Hurwitz formula, $S$ has genus $g=13$.
Let $\widetilde{\varphi}:S \to \widehat{\mathbb C}$ be a Galois branched covering with deck group $L$.
Let us consider the $3$-homology cover $\widetilde{S}_{3}$ and its associated group $M_{3} \cong {\mathbb Z}_{3}^{26}$. In this case, as $r \circ h$ has fixed points, the group $\widetilde{L}_{3}$ contains liftings of $r \circ h$, which are of order five (and having fixed points).  As $r$ acts freely on $S$, the Riemann surface $S/\langle r \rangle$ has genus five. So, this means that there is basis of ${\rm H}_{1}(S,{\mathbb Z})$, say $a_{1},\ldots,a_{13},b_{1},\ldots,b_{13},$
such that the homological action of $r$ is given by
$$\begin{array}{l}
r_{*}(a_{3j-2})=a_{3j-1}, \; r_{*}(a_{3j})=a_{3j-2}, \; 
r_{*}(b_{3j-2})=b_{3j-1}, \; r_{*}(b_{3j})=b_{3j-2}, \;  (j=1,\ldots, 4),\\
r_{*}(a_{13})=a_{13}, \; r_{*}(b_{13})=b_{13}.
\end{array}
$$

This means that, in $\widetilde{L}_{3}$ we may chose a lifting $\psi_{r}$ of $r$ such that $\psi_{r}^{3}=a_{13}$.
By Lemma \ref{lemita1}, there is no $\rho \in \widetilde{L}_{3}$ of order three such that $\theta_{3}(\rho)=r$. This now asserts that $\widetilde{L}_{3}$ does not contain a subgroup isomorphic to ${\mathfrak A}_{5}$ and, in particular, that it is not isomorphic to $M_{3} \rtimes {\mathfrak A}_{5}$. As $h$ has order two, we may assume that $\psi_{h} \in \widetilde{L}_{3}$ (which projects to $h$) also of order two.

Let $N_{1}=\langle a_{1},\ldots, a_{12}, b_{1}, \ldots, b_{13}\rangle \cong {\mathbb Z}_{3}^{25},$
which is $\psi_{r}$-invariant subgroup of $M_{3}$, and $N_{2}={\rm Core}_{\widetilde{L}_{3}}(N_{1}).$

Let us consider the Riemann surfaces $Z=\widetilde{S}_{3}/N_{2}$ and $U=\widetilde{S}_{3}/N_{1}$.
The inclusion of $N_{1}$ in $M_{3}$ induces a Galois unbranched covering $\pi_{S}:U \to S$, whose deck group is $\widehat{\mathcal A}=M_{3}/N_{1} \cong {\mathbb Z}_{3}$.
The inclusion of $N_{2}$ in $N_{1}$ induces a Galois unbranched covering $\pi_{U}:Z \to U$, whose deck group is the abelian group ${\mathcal U}=N_{1}/N_{2} \cong {\mathbb Z}_{3}^{s-1}$, some $s \geq 2$.
The inclusion of $N_{2}$ in $\widetilde{L}_{3}$ induces a Galois branched covering $\widetilde{\psi}:Z \to \widehat{\mathbb C}$, whose deck group is ${\mathcal G}=\widetilde{L}_{3}/N_{2}$. 
We may assume that $\widetilde{\psi}=\widetilde{\varphi} \circ \pi_{S} \circ \pi_{U}$.
By the choice of $N_{2}$, it follows that $\widetilde{\psi}$ is the Galois closure of $\widetilde{\varphi} \circ \pi_{S}$ and ${\mathcal G}$ the corresponding Galois group. 
Let us consider the subgroup ${\mathcal K}=M_{3}/N_{2} \cong {\mathbb Z}_{3}^{s}$ of ${\mathcal G}$. Note that $S=Z/{\mathcal K}$.
As $a_{13} \notin N_{1}$, it does neither belongs to $N_{2}$. This means (from the above) that ${\mathcal G}$ does not contain a copy of $L$, so it is not isomorphic to ${\mathcal K} \rtimes L$.
\qed
\end{example}

%%%%%%%%%%%%
\begin{rema}
Let us consider the subgroup $K=\langle r,s=(h \circ r) \circ h \circ (h \circ r)^{-1}\rangle \cong {\mathfrak A}_{4}$ of $L$ in the above example, and let $X=S/K$. By the Riemann-Hurwitz formula, and as $K$ acts freely on $S$, the genus of $X$ is four.
Let $Q:S \to X$ be a Galois covering, with deck group $K$, and $\varphi:X \to \widehat{\mathbb C}$ be a degree five branched covering such that $\widetilde{\varphi} =\varphi \circ Q$. The Galois closure of $\varphi$ is $\widetilde{\varphi}$.
Let $\widehat{K}=\langle s,t:=r \circ s \circ r^{-1}\rangle \cong {\mathbb Z}_{2}^{2}$ and the Riemann surface $Y=S/\widehat{K}$ of genus four admitting the group ${\mathcal A}=K/\widehat{K} \cong {\mathbb Z}_{3}$ and such that $Y/{\mathcal A}=X$. Let $P:Y \to X$ be a Galois covering, with deck group ${\mathcal A}$, and $\pi_{Y}:S \to Y$ a Galois covering with deck group $\widehat{K}$, such that 
$Q=P \circ \pi_{Y}$. So, $\widetilde{\varphi}$ is also the Galois closure of $\varphi \circ P$ with deck ${\mathcal G}=L$.\qed
\end{rema}

%%%%%%%%%%%%%
%%%%%%%%%%%%%%%
\section{Example: Automorphisms of order three}
Let us assume $S$ a closed Riemann surface of genus $g \geq 2$, $L=\langle \phi \rangle \cong {\mathbb Z}_{3}$,  with $\phi$ acting with fixed points.

\begin{rema}[Galois closures]
As a consequence of Corollary \ref{coro3}, 
if $\widetilde{\varphi}:S \to T$ is a Galois branched covering, with deck group $L=\langle \phi \rangle \cong {\mathbb Z}_{3}$ and $\phi$ having fixed points, and $\pi_{U}:U \to S$ is an abelian unbranched covering, with deck group $\widehat{\mathcal A}$, then the Galois closure $\widetilde{\psi}:Z \to T$ of $\widetilde{\varphi} \circ \pi_{U}:U \to T$ has deck group ${\mathcal G}={\mathcal K} \rtimes {\mathbb Z}_{3}$, where ${\mathcal K} \cong {\mathbb Z}_{k}^{s}$ and $k \geq 2$ is the exponent of $\widehat{\mathcal A}$.\qed
\end{rema}

%%%%%%%%%%
\subsection{The case $S/L$ of genus zero}
In this case, $\phi$ has $n+1$ fixed points, where $n \geq 3$, and $g=n-1$.
The Riemann orbifold $S/L$ can be uniformized by a Fuchsian group 
$$F=\langle \delta_{1},\ldots,\delta_{n+1}: \delta_{1}^{3}=\cdots=\delta_{n+1}^{3}=\delta_{1}\cdots\delta_{n+1}=1\rangle < {\rm PSL}_{2}({\mathbb R}).$$

Up to change of $\phi$ by $\phi^{-1}$, there is some $0 \leq l \leq (n+1)/2$, where $n+1-2l$ is divisible by $3$, such that the topological action of $L$ corresponds to the surjective homomorphism
$$\omega:F \to L,$$
where
$$\omega: \left\{\begin{array}{l}
\delta_{2j-1} \mapsto \phi, \; j=1,\ldots,l\\
\delta_{2j} \mapsto \phi^{-1}, \; j=1,\ldots,l\\
\delta_{2l+i} \mapsto \phi, \; i=1,\ldots,n+1-2l
\end{array}
\right\}.
$$

There is a (symplectic) basis, say $a_{1},\ldots,a_{g},b_{1},\ldots,b_{g},$
such that the action of $\phi_{*}$ is described as follows:
$$\begin{array}{l}
\phi_{*}(a_{2j-1})=a_{2j}, \; \phi_{*}(a_{2j})=a_{2j-1}^{-1}a_{2j}^{-1}, \; \phi_{*}(b_{2j-1})=b_{2j}, \; \phi_{*}(b_{2j})=b_{2j-1}^{-1}b_{2j}^{-1}, \; (j=1,\ldots,l);\\
\phi_{*}(a_{2l+i})=b_{2l+i},\; \phi_{*}(b_{2l+i})=a_{2l+i}^{-1}b_{2l+i}^{-1},\; (i=1,\ldots,g-2l).
\end{array}
$$

In this case, there is some $\psi \in \widetilde{L}_{\infty}$ such that $\theta_{\infty}(\psi)=\phi$ and $\psi^{3}=1$. Then (for $k \geq 2$ and $k=\infty$)
$$\begin{array}{l}
\widetilde{L}_{k}=\langle a_{1},\ldots,a_{g},b_{1},\ldots,b_{g},\psi:  a_{1}^{k}=\cdots=a_{g}^{k}=b_{1}^{k}=\cdots=b_{g}^{k}= \psi^{3}=1; \\
\mbox{} [a_{i},a_{j}]=[a_{i},b_{j}]=[b_{i},b_{j}]=1, \; (i,j =1,\ldots, g);\\
\psi \circ a_{2j-1} \circ \psi^{-1}=a_{2j}, \; \psi \circ a_{2j} \circ \psi^{-1}=a_{2j-1}^{-1}a_{2j}^{-1},\\
\psi \circ b_{2j-1} \circ \psi^{-1}=b_{2j}, \; \psi \circ b_{2j} \circ \psi^{-1}=b_{2j-1}^{-1}\circ b_{2j}^{-1}, \; (j=1,\ldots,l);\\
\psi \circ a_{2l+i} \psi^{-1}=b_{2l+i},\; \psi \circ b_{2l+i} \psi^{-1}=a_{2l+i}^{-1} \circ b_{2l+i}^{-1},\; (i=1,\ldots,g-2l)\rangle=\\
=\langle a_{1},\ldots,a_{g},b_{1},\ldots,b_{g}\rangle \rtimes \langle \psi \rangle \cong {\mathbb Z}_{k}^{2g} \rtimes {\mathbb Z}_{3}.
\end{array}
$$

%%%%%%%%%%%%%%
\begin{example}[An example of Galois closure]\label{Sec:711}
Let us consider the following subgroup of $M_{k}$, which is not $\psi$-invariant:
$$N_{1}=\langle a_{1},a_{3},\ldots,a_{g},b_{1},\ldots,b_{g}\rangle \cong {\mathbb Z}_{k}^{2g-1}.$$

In this case, $$\widehat{\mathcal A}=M_{k}/N_{1}=\langle A_{2}: A_{2}^{k}=1\rangle \cong {\mathbb Z}_{k}.$$

The maximal subgroup of $N_{1}$ which is $\psi$-invariant is
$$N_{2}=N_{1} \cap \psi N_{1} \psi^{-1} \cap \psi^{-1} N_{1} \psi=\langle a_{3},\ldots,a_{g},b_{1},\ldots,b_{g}\rangle \cong {\mathbb Z}_{k}^{2g-2},$$
which satisfies that
$$\begin{array}{l}
{\mathcal K}=M_{4}/N_{2}=\langle A_{1},A_{2}: A_{1}^{k}=A_{2}^{k}=[A_{1},A_{2}]=1\rangle \cong {\mathbb Z}_{k}^{2},\\
{\mathcal U}=N_{1}/N_{2}=\langle A_{1}: A_{1}^{k}=1\rangle \cong {\mathbb Z}_{k},\\
{\mathcal G}=\widetilde{L}_{k}/N_{2}=\langle A_{1}, A_{2},\Psi: A_{1}^{k}=A_{2}^{k}=[A_{1},A_{2}]=\Psi^{3}=1, \; 
\Psi \circ A_{1} \circ \Psi^{-1}=A_{2},\\
\Psi \circ A_{2} \circ \Psi^{-1}=A_{1}^{-1} \circ A_{2}^{-1} \rangle=
\langle A_{1},A_{2}\rangle \rtimes \langle \Psi\rangle \cong {\mathbb Z}_{k}^{2} \rtimes {\mathbb Z}_{3}.
\end{array}$$

Note that, in the case $k=2$, ${\mathcal G} \cong {\mathfrak A}_{4}$ (the alternating group in four letters).

Let us consider the Riemann surfaces:
$Z=\widetilde{S}_{k}/N_{2}, \; U=\widetilde{S}_{k}/N_{1},$
together with the Galois coverings
$\pi_{U}:U \to S$ (whose deck group is ${\mathcal A}$), $\widetilde{\varphi}:S \to \widehat{\mathbb C}$ (whose deck group is $L$). Then $\widetilde{\varphi} \circ \pi_{U}:U \to \widehat{\mathbb C}$ is a non-Galois covering of degree $3k$, whose Galois closure is $\widetilde{\psi}:Z \to \widehat{\mathbb C}$, with deck group ${\mathcal G}={\mathcal K} \rtimes L \cong {\mathbb Z}_{k}^{2} \rtimes {\mathbb Z}_{3}$.
\qed
\end{example}

%%%%%%%%%%
\subsection{The case $S/L$ of genus at least one}
In this case, $\phi$ has $n+1$ fixed points, where $n \geq 1$, and $S/L$ has genus $\gamma \geq 1$. By the Riemann-Hurwitz formula, 
$g=3\gamma+ n-1$.
The Riemann orbifold $S/L$ can be uniformized by a Fuchsian group 
$$F=\langle \alpha_{1},\beta_{1},\ldots,\alpha_{\gamma},\beta_{\gamma}, \delta_{1},\ldots,\delta_{n+1}: \delta_{1}^{3}=\cdots=\delta_{n+1}^{3}=
\delta_{1}\cdots\delta_{n+1} \prod_{j=1}^{\gamma}[\alpha_{j},\beta_{j}]=1\rangle < {\rm PSL}_{2}({\mathbb R}).$$

Up to change of $\phi$ by $\phi^{-1}$, there is some $0 \leq l \leq (n+1)/2$, where $n+1-2l$ is divisible by $3$, such that the topological action of $L$ corresponds to the surjective homomorphism
$$\omega:F \to L,$$
where
$$\omega: \left\{\begin{array}{l}
\alpha_{s}, \beta_{s} \mapsto 1, \; s=1,\ldots,\gamma\\
\delta_{2j-1} \mapsto \phi, \; j=1,\ldots,l\\
\delta_{2j} \mapsto \phi^{-1}, \; j=1,\ldots,l\\
\delta_{2l+i} \mapsto \phi, \; i=1,\ldots,n+1-2l.
\end{array}
\right.
$$

There is a (symplectic) basis, say 
$a_{1},\ldots,a_{g},b_{1},\ldots,b_{g},$
such that the action of $\phi_{*}$ is described as follows (below, we might have $l=0$):
$$\begin{array}{l}
\phi_{*}(a_{3j-2})=a_{3j-1}, \; \phi_{*}(a_{3j-1})=a_{3j}, \; \phi_{*}(a_{3j})=a_{3j-2},\\
\phi_{*}(b_{3j-2})=b_{3j-1}, \; \phi_{*}(b_{3j-1})=b_{3j}, \; \phi_{*}(b_{3j})=b_{3j-2}, \; (j=1,\ldots,\gamma);\\
\phi_{*}(a_{3\gamma+2i-1})=a_{3\gamma+2i}, \; \phi_{*}(a_{3\gamma+2i})=a_{3\gamma+2i-1}^{-1}a_{3\gamma+2i}^{-1},\\
\phi_{*}(b_{3\gamma+2i-1})=b_{3\gamma+2i}, \; \phi_{*}(b_{3\gamma+2i})=b_{3\gamma+2i-1}^{-1}b_{3\gamma+2i}^{-1}, \; (i=1,\ldots,l);\\
\phi_{*}(a_{3\gamma+2l+2l+s})=b_{3\gamma+2l+s},\; \phi_{*}(b_{3\gamma+2l+s})=a_{3\gamma+2l+s}^{-1}b_{3\gamma+2l+s}^{-1},\; (s=1,\ldots,g-3\gamma-2l).
\end{array}
$$

In this case, there is some $\psi \in \widetilde{L}_{\infty}$ such that $\theta_{\infty}(\psi)=\phi$ and $\psi^{3}=1$. Then (for $k \geq 2$ and $k=\infty$)
$$\begin{array}{l}
\widetilde{L}_{k}=\langle a_{1},\ldots,a_{g},b_{1},\ldots,b_{g},\psi: a_{1}^{k}=\cdots=a_{g}^{k}=b_{1}^{k}=\cdots=b_{g}^{k}= \psi^{3}=1;\\
\mbox{} [a_{i},a_{j}]=[a_{i},b_{j}]=[b_{i},b_{j}]=1, \; (i,j =1,\ldots, g);\\
\psi \circ a_{3j-2} \circ \psi^{-1}=a_{3j-1}, \; \psi \circ a_{3j-1} \circ \psi^{-1}=a_{3j}, \; \psi \circ a_{3j} \circ \psi^{-1}=a_{3j-2},\\
\psi \circ b_{3j-2} \circ \psi^{-1}=b_{3j-1}, \;\psi \circ b_{3j-1} \circ \psi^{-1}=b_{3j}, \; \psi \circ b_{3j} \circ \psi^{-1}=b_{3j-2}, \; (j=1,\ldots,\gamma);\\
\psi \circ a_{3\gamma+2i-1} \circ \psi^{-1}=a_{3\gamma+2i}, \; \psi \circ a_{3\gamma+2i} \circ \psi^{-1}=a_{3\gamma+2i-1}^{-1}a_{3\gamma+2i}^{-1},\\
\psi \circ b_{3\gamma+2i-1} \circ \psi^{-1}=b_{3\gamma+2i}, \; \psi \circ b_{3\gamma+2i} \circ \psi^{-1}=b_{3\gamma+2i-1}^{-1}b_{3\gamma+2i}^{-1}, \; (i=1,\ldots,l);\\
\psi \circ a_{3\gamma+2l+2l+s} \circ \psi^{-1}=b_{3\gamma+2l+s},\; \psi \circ b_{3\gamma+2l+s} \circ \psi^{-1}=a_{3\gamma+2l+s}^{-1}b_{3\gamma+2l+s}^{-1},\; (s=1,\ldots,g-3\gamma-2l) \rangle.\\
=\langle a_{1},\ldots,a_{g},b_{1},\ldots,b_{g}\rangle \rtimes \langle \psi \rangle \cong {\mathbb Z}_{k}^{2g} \rtimes {\mathbb Z}_{3}.
\end{array}
$$

%%%%%%%%%%%%%%
\subsection{A particular case: $\gamma=1$ and $n=5$}
In this case, $S$ has genus $g=7$, $\phi$ has exactly $6$ fixed points, and $T=S/L$ is a Riemann orbifold of genus one with exactly $6$ cone points of order three. The Fuchsian group $F$, in this case, is  
$$F=\langle \alpha,\beta,\delta_{1},\ldots,\delta_{6}: [\alpha,\beta]\delta_{1}\cdots \delta_{6}=1=\delta_{1}^{3}=\cdots=\delta_{6}^{3}\rangle < {\rm PSL}_{2}({\mathbb R}).$$

The two topologically different actions of $L$ correspond to the homomorphisms
$$\omega_{1},\omega_{2}:F \to L,$$
where
$$\omega_{1}: \left\{\begin{array}{l}
\alpha, \beta \mapsto 1\\
\delta_{1},\ldots,\delta_{6} \mapsto \phi
\end{array}
\right\};\;
\omega_{2}: \left\{\begin{array}{l}
\alpha, \beta \mapsto 1\\
\delta_{1},\delta_{3},\delta_{5} \mapsto \phi\\
\delta_{2},\delta_{4},\delta_{6} \mapsto \phi^{-1}
\end{array}
\right\}.
$$

%%%%%%%%%
\subsubsection{\bf The case of $\omega_{1}$}
There is a (symplectic) basis, say 
$a_{1},\ldots,a_{7},b_{1},\ldots,b_{7},$
such that the actions of $\phi_{*}$ is described as follows:
$$\begin{array}{l}
\phi_{*}(a_{1})=a_{2}, \; \phi_{*}(a_{2})=a_{3}, \; \phi_{*}(a_{3})=a_{1},\; \phi_{*}(b_{1})=b_{2}, \; \phi_{*}(b_{2})=b_{3}, \; \phi_{*}(b_{3})=b_{1},\\
\phi_{*}(a_{4})=a_{5}, \; \phi_{*}(a_{5})=a_{4}^{-1}\circ a_{5}^{-1}, \; \phi_{*}(b_{4})=b_{5}, \; \phi_{*}(b_{5})=b_{4}^{-1}\circ b_{5}^{-1},\\
\phi_{*}(a_{6})=b_{6}, \; \phi_{*}(b_{6})=a_{6}^{-1} \circ b_{6}^{-1}, \; \phi_{*}(a_{7})=b_{7}, \; \phi_{*}(b_{7})=a_{7}^{-1} \circ b_{7}^{-1}.
\end{array}
$$

In this case,
$$\begin{array}{l}
\widetilde{L}_{k}=\langle a_{1},\ldots,a_{7},b_{1},\ldots,b_{7},\psi:\\
a_{1}^{k}=\cdots=a_{7}^{k}=b_{1}^{k}=\cdots=b_{7}^{k}=\psi^{3}=1, \; [a_{i},a_{j}]=[a_{i},b_{j}]=[b_{i},b_{j}]=1,\\
\psi \circ a_{1} \circ \psi^{-1}=a_{2}, \; \psi \circ a_{2} \circ \psi^{-1}=a_{3}, \; \psi \circ a_{3} \circ \psi^{-1}=a_{1},\\
\psi \circ b_{1} \circ \psi^{-1}=b_{2}, \; \psi \circ b_{2} \psi^{-1}=b_{3}, \; \psi \circ b_{3} \circ \psi^{-1}=b_{1},\\
\psi \circ a_{4} \circ \psi^{-1}=a_{5}, \; \psi \circ a_{5} \circ \psi^{-1}=a_{4}^{-1}\circ a_{5}^{-1},\;
\psi \circ b_{4} \circ \psi^{-1}=b_{5}, \; \psi \circ b_{5} \circ \psi^{-1}=b_{4}^{-1}\circ b_{5}^{-1},\\
$$\psi \circ a_{6} \circ \psi^{-1}=b_{6}, \; \psi \circ b_{6} \circ \psi^{-1}=a_{6}^{-1} \circ b_{6}^{-1}, \;
\psi \circ a_{7} \circ \psi^{-1}=b_{7}, \; \psi \circ b_{7} \circ \psi^{-1}=a_{7}^{-1} \circ b_{7}^{-1}\rangle=\\
=M_{k} \rtimes \langle \psi\rangle \cong {\mathbb Z}_{k}^{14} \rtimes {\mathbb Z}_{3}.
\end{array}
$$

%%%%%%%%%
\subsubsection{\bf The case of $\omega_{2}$}
There is a (symplectic) basis, say  
$a_{1},\ldots,a_{7},b_{1},\ldots,b_{7},$
such that the actions of $\phi_{*}$ is described as follows:
$$\begin{array}{l}
\phi_{*}(a_{1})=a_{2}, \; \phi_{*}(a_{2})=a_{3}, \; \phi_{*}(a_{3})=a_{1},\;
\phi_{*}(b_{1})=b_{2}, \; \phi_{*}(b_{2})=b_{3}, \; \phi_{*}(b_{3})=b_{1},\\
\phi_{*}(a_{4})=a_{5}, \; \phi_{*}(a_{5})=a_{4}^{-1}\circ a_{5}^{-1},\;
\phi_{*}(b_{4})=b_{5}, \; \phi_{*}(b_{5})=b_{4}^{-1}\circ b_{5}^{-1},\\
\phi_{*}(a_{6})=a_{7}, \; \phi_{*}(a_{7})=a_{6}^{-1} \circ a_{7}^{-1}, \;
\phi_{*}(b_{6})=b_{7}, \; \phi_{*}(b_{7})=b_{6}^{-1} \circ b_{7}^{-1}.
\end{array}
$$

In this case,
$$\begin{array}{l}
\widetilde{L}_{k}=\langle a_{1},\ldots,a_{7},b_{1},\ldots,b_{7},\psi:\\
a_{1}^{k}=\cdots=a_{7}^{k}=b_{1}^{k}=\cdots=b_{7}^{k}=\psi^{3}=1, \; [a_{i},a_{j}]=[a_{i},b_{j}]=[b_{i},b_{j}]=1,\\
\psi \circ a_{1} \circ \psi^{-1}=a_{2}, \; \psi \circ a_{2} \circ \psi^{-1}=a_{3}, \; \psi \circ a_{3} \circ \psi^{-1}=a_{1},\\
\psi \circ b_{1} \circ \psi^{-1}=b_{2}, \; \psi \circ b_{2} \psi^{-1}=b_{3}, \; \psi \circ b_{3} \circ \psi^{-1}=b_{1},\\
\psi \circ a_{4} \circ \psi^{-1}=a_{5}, \; \psi \circ a_{5} \circ \psi^{-1}=a_{4}^{-1}\circ a_{5}^{-1},\;
\psi \circ b_{4} \circ \psi^{-1}=b_{5}, \; \psi \circ b_{5} \circ \psi^{-1}=b_{4}^{-1}\circ b_{5}^{-1},\\
\psi \circ a_{6} \circ \psi^{-1}=a_{7}, \; \psi \circ a_{7} \circ \psi^{-1}=a_{6}^{-1} \circ a_{7}^{-1}, \;
\psi \circ b_{6} \circ \psi^{-1}=b_{7}, \; \psi \circ b_{7} \circ \psi^{-1}=b_{6}^{-1} \circ b_{7}^{-1}\rangle=\\
=M_{k} \rtimes \langle \psi\rangle \cong {\mathbb Z}_{k}^{14} \rtimes {\mathbb Z}_{3}.
\end{array}
$$

%%%%%%%%%%%%%%%
\begin{example}[$k=3$]
Let us consider the following subgroup of $M_{3}$, which is $\psi$-invariant
$$N_{1}=\langle a_{1} \circ a_{2}^{-1},a_{1} \circ a_{3}^{-1},a_{4},\ldots,a_{7},b_{1},\ldots,b_{7}\rangle \cong {\mathbb Z}_{3}^{13}.$$

Let us consider the Riemann surface $Z=\widetilde{S}_{3}/N_{1}$ of genus $19$.
This surface admits the group
$${\mathcal G}=\widetilde{L}_{3}/N_{1}=\langle A_{1}, \Psi: A_{1}^{3}=\Psi^{3}=[A_{1},\Psi]=1 \rangle \cong {\mathbb Z}_{3}^{2}$$
as a group of conformal automorphisms, such that $S=Z/\langle A_{1} \rangle$, $Z/{\mathcal G}=S/T$, and so that the only elements of ${\mathcal G}$ acting with fixed points on $Z$ are the powers of $\Psi$.
\qed
\end{example}

%%%%%%%%%%%%%%%
%%%%%%%%%%%%%%%
\section{Example: The symmetric group ${\mathfrak S}_{3}$}\label{Sec:S3}
We now assume $L \cong {\mathfrak S}_{3}$ with the presentation ${\mathcal P}=\langle r,h: r^{3}=h^{2}=(r \circ h)^{2}=1\rangle$, $S$ a closed Riemann surface of genus $g \geq 2$, 
${\rm Fix}(r)=\emptyset$, and ${\rm Fix}(h) \neq \emptyset$ (so $h$, $r \circ h$ and $r^{2} \circ h$ have fixed points; these are permuted by $r$). 
Let $n \geq 1$ (necessarily odd) be such that $h$ has $n+1$ fixed points. By the Riemann-Hurwitz formula, $g=3 \gamma+\frac{3n-7}{2}$, where $\gamma$ is the genus of $S/L$.  As we are assuming $g \geq 2$, then, for $\gamma=0$, we must have that $n \geq 5$ is odd.

To simplify notations, we will assume that  $S/L$ of genus zero (but everything can be stated for the general case).

A Fuchsian group uniformizing the Riemann orbifold $S/L$ has presentation
$$F=\langle \delta_{1},\ldots,\delta_{n+1}: \delta_{1}\cdots \delta_{n+1}=1=\delta_{1}^{2}=\cdots=\delta_{n+1}^{2}\rangle < {\rm PSL}_{2}({\mathbb R}).$$

The topological action of $L$ corresponds to a homomorphism
$$\omega_{l_{1},l_{2}}:F \to L,$$
where
$$\omega_{l_{1},l_{2}}: \left\{\begin{array}{l}
\delta_{1},\ldots,\delta_{2l_{1}} \mapsto h\\
\delta_{2l_{1}+1},\ldots,\delta_{2l_{1}+2l_{2}} \mapsto r\circ h\\
\delta_{2l_{1}+2l_{2}+1},\ldots,\delta_{n+1} \mapsto r^{2} \circ h.
\end{array}
\right.
$$

They are topologically equivalent to $\omega_{n-1,2}$.

There is a (symplectic) basis, say  
$a_{1},\ldots,a_{g},b_{1},\ldots,b_{g},$
such that the actions of $r_{*}$ and $h_{8}$ are described as follows:
$$\begin{array}{l}
r_{*}(a_{3j-2})=a_{3j-1}, \; r_{*}(a_{3j-1})=a_{3j}, \;
r_{*}(b_{3j-2})=b_{3j-1}, \; r_{*}(b_{3j-1})=b_{3j}, \; (j=1,\ldots,\frac{g-1}{3}),\\
r_{*}(a_{g})=a_{g}, \; r_{*}(b_{g})=b_{g},\\
h_{*}(a_{3j-2})=a_{3j-2}^{-1}, \; h_{*}(a_{3j-1}=a_{3j}^{-1}, \; 
h_{*}(b_{3j-2})=b_{3j-2}^{-1}, \; h_{*}(b_{3j-1})=b_{3j}^{-1}, \; (j=1,\ldots,\frac{g-1}{3}),\\
h_{*}(a_{g})=a_{g}^{-1}, \; h_{*}(b_{g})=b_{g}^{-1}.
\end{array}
$$

In this case, there are $\psi_{r},\psi_{h} \in \widetilde{L}_{\infty}$ such that
$$\theta_{\infty}(\psi_{r})=r, \; \theta_{\infty}(\psi_{h})=h,\;
\psi_{h}^{2}=(\psi_{r} \circ \psi_{h})^{2}=1, \; \psi_{r}^{3}=b_{g}.$$

In particular,
$$\begin{array}{l}
\widetilde{L}_{k}=\langle a_{1},\ldots,a_{g},b_{1},\ldots,b_{g},\psi_{r},\psi_{h}:\\
a_{1}^{k}=\cdots=a_{g}^{k}=b_{1}^{k}=\cdots=b_{g}^{k}=1;\; 
\psi_{r}^{3}=b_{g}, \; \psi_{h}^{2}=(\psi_{r} \circ \psi_{h})^{2}=1; \\
\mbox{}[a_{i},a_{j}]=[a_{i},b_{j}]=[b_{i},b_{j}]=1, \; (i,j =1,\ldots, g);\\
\psi_{r} \circ a_{3j-2} \circ \psi_{r}^{-1}=a_{3j-1}, \; \psi_{r} \circ a_{3j-1} \circ \psi_{r}^{-1}=a_{3j},\\
\psi_{r} \circ b_{3j-2} \circ \psi_{r}^{-1}=b_{3j-1}, \; \psi_{r} \circ b_{3j-1} \circ \psi_{r}^{-1}=b_{3j}, \; (j=1,\ldots,\frac{g-1}{3}),\\
\psi_{r} \circ a_{g} \circ \psi_{r}^{-1}=a_{g}, \; \psi_{r} \circ b_{g} \circ \psi_{r}^{-1}=b_{g},\;
\psi_{h} \circ a_{3j-2} \circ \psi_{h}=a_{3j-2}^{-1}, \; \psi_{h} \circ a_{3j-1} \circ \psi_{h}=a_{3j}^{-1},\\
\psi_{h} \circ b_{3j-2} \circ \psi_{h}=b_{3j-2}^{-1}, \; \psi_{h} \circ b_{3j-1} \circ \psi_{h}=b_{3j}^{-1}, \; (j=1,\ldots,\frac{g-1}{3}),\\
\psi_{h} \circ a_{g} \circ \psi_{h}=a_{g}^{-1}, \; \psi_{h} \circ b_{g} \circ \psi_{h}=b_{g}^{-1}\rangle,\\
M_{k,{\mathcal P},\{\psi_{r},\psi_{h}\}}=\langle b_{g} \rangle,\\
{\mathcal K}_{k,{\mathcal P},\{\psi_{r},\psi_{h}\}}=M_{k}/M_{k,{\mathcal P},\{\psi_{r},\psi_{h}\}}=\langle a_{1},\ldots,a_{g},b_{1},\ldots,b_{g-1}\rangle \cong {\mathbb Z}_{k}^{2g-1},\\ 
{\mathcal G}_{k,{\mathcal P},\{\psi_{r},\psi_{h}\}}=A_{k,{\mathcal P},\{\psi_{r},\psi_{h}\}} \rtimes \langle \Psi_{r}, \Psi_{h} \rangle \cong {\mathbb Z}_{k}^{2g-1} \rtimes {\mathfrak S}_{3}.
\end{array}
$$

%%%%%%%%%%%%%%%%
\subsection{Case $k \geq 2$ is not divisible by $3$}
 
\begin{lemm}\label{lema1}
If $k \geq 2$ is not divisible by $3$, then there exists $\rho \in \widetilde{L}_{k}$ of order three with $\theta_{k}(\rho)=r$ such that $(\rho \circ \psi_{h})^{2}=1$. 
In particular, 
$$\widetilde{L}_{k}=M_{k} \rtimes \langle \rho, \psi_{h}\rangle \cong {\mathbb Z}_{k}^{2g} \rtimes {\mathfrak S}_{3}.$$
\end{lemm}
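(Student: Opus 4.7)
The plan is to modify the chosen lift $\psi_r$ of $r$ by an element $\alpha \in M_k$ so that the new lift $\psi_r' := \psi_r \circ \alpha$ satisfies $(\psi_r')^3 = 1$ while still having $\psi_h^2 = 1$ and $(\psi_r' \circ \psi_h)^2 = 1$. Once such a modification exists, the subgroup $\hat{L} := \langle \psi_r', \psi_h \rangle \leq \widetilde{L}_k$ satisfies all three defining relations of the presentation $\mathcal{P}$ of $\mathfrak{S}_3$, and maps surjectively onto $L$ under $\theta_k$, hence is isomorphic to $\mathfrak{S}_3$; since $M_k \cap \hat{L} = \{1\}$ (as $\theta_k|_{\hat{L}}$ is injective), the short exact sequence splits and yields $\widetilde{L}_k = M_k \rtimes \hat{L} \cong \mathbb{Z}_k^{2g} \rtimes \mathfrak{S}_3$.

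To carry out the modification, I compute the three relevant words after the substitution. Using that $\psi_r \alpha \psi_r^{-1} = r_*(\alpha)$ and that $M_k$ is abelian, a short calculation gives
\[
(\psi_r \circ \alpha)^3 \;=\; \psi_r^3 \cdot r_*^{2}(\alpha)\, r_*(\alpha)\, \alpha \;=\; b_g \cdot N(\alpha),
\]
where $N(\alpha) := \alpha\, r_*(\alpha)\, r_*^2(\alpha)$ is the $\langle r_*\rangle$-norm. Similarly, using $\psi_h \psi_r \psi_h = \psi_r^{-1}$ (from $(\psi_r \psi_h)^2 = 1$ and $\psi_h^2=1$),
\[
(\psi_r \circ \alpha \circ \psi_h)^2 \;=\; r_*(\alpha)\, h_*(\alpha).
\]
So I need $\alpha \in M_k$ with $N(\alpha) = b_g^{-1}$ and $r_*(\alpha)\, h_*(\alpha) = 1$.

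The natural candidate is $\alpha = b_g^{c}$ for an appropriate integer $c$. Since $r_*(b_g) = b_g$ and $h_*(b_g) = b_g^{-1}$, we have $N(b_g^c) = b_g^{3c}$ and $r_*(b_g^c) h_*(b_g^c) = b_g^c \cdot b_g^{-c} = 1$, so the second equation is automatic. The first equation becomes $3c \equiv -1 \pmod k$, which has a solution exactly when $\gcd(3,k) = 1$, i.e., when $k$ is not divisible by $3$.

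The only potential obstacle is that I have freedom to adjust \emph{both} $\psi_r$ and $\psi_h$, but I do not need to: the calculation above shows that adjusting $\psi_r$ alone by a power of the $r_*$-fixed, $h_*$-anti-invariant element $b_g$ simultaneously repairs the relation $\psi_r^3=1$ without disturbing the other two relations. The invertibility of $3$ modulo $k$ is precisely what converts the constraint on $c$ from unsolvable (when $3 \mid k$) to solvable, which explains why the hypothesis on $k$ is exactly what is needed.
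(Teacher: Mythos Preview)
Your proof is correct and follows essentially the same approach as the paper: both modify $\psi_r$ by $b_g^{s}$ with $3s \equiv -1 \pmod{k}$, using that $r_*(b_g)=b_g$ and $h_*(b_g)=b_g^{-1}$ so that the relation $(\psi_r'\psi_h)^2=1$ is preserved. The only cosmetic difference is that the paper writes the new lift as $\rho=\psi_r^{lk}$ (with $lk=1+3s$), which makes $\rho^3=(\psi_r^{3k})^{l}=1$ immediate without the explicit norm computation.
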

\begin{proof}
There are integers $l,s \neq 0$ such that $lk=1+3s$. If we set $\rho=\psi_{r}^{lk}=\psi_{r} \circ b_{g}^{s}$, then 
$\theta_{k}(\rho)=r$, and 
$(\rho \circ \psi_{h})^{2}=\rho \circ \psi_{h} \circ \rho \circ \psi_{h}=\psi_{r}^{lk} \circ \psi_{h} \circ \psi_{r}^{lk} \circ \psi_{h}=
\psi_{r} \circ (b_{g}^{s} \circ \psi_{h} \circ b_{g}^{s}) \circ \psi_{r}\circ \psi_{h}=
\psi_{r} \circ \psi_{h} \circ \psi_{r}\circ \psi_{h}=1$.
\end{proof}

\begin{rema}[Galois closures]
As a consequence, 
if $\widetilde{\varphi}:S \to T$ is a Galois branched covering, with deck group $L\cong {\mathfrak S}_{3}$ such that the elements of order two have fixed points and the one of order three does not,  and $\pi_{U}:U \to S$ is an abelian unbranched covering, with deck group $\widehat{\mathcal A}$, then the Galois closure $\widetilde{\psi}:Z \to T=S/L$ of $\widetilde{\varphi} \circ \pi_{U}:U \to T$ has deck group ${\mathcal G}={\mathcal K} \rtimes {\mathfrak S}_{3}$, where ${\mathcal K}$ is isomorphic to a subgroup of ${\mathbb Z}_{k}^{2g}$ and $k \geq 2$ is the exponent of $\widehat{\mathcal A}$.\qed
\end{rema}

%%%%%%%%%%%%%%
\begin{example}[$k=2$]\label{Sec:k=2}
By Lemma \ref{lema1}, 
$$\begin{array}{l}
\widetilde{L}_{2}=\langle a_{1},\ldots,a_{g},b_{1},\ldots,b_{g},\rho,\psi_{h}:
a_{1}^{2}=\cdots=a_{g}^{2}=b_{1}^{2}=\cdots=b_{g}^{2}=1;\\
\rho^{3}=\psi_{h}^{2}=(\rho \circ \psi_{h})^{2}=1; \;
[a_{i},a_{j}]=[a_{i},b_{j}]=[b_{i},b_{j}]=1, \; (i,j =1,\ldots, g);\\
\rho \circ a_{3j-2} \circ \rho^{-1}=a_{3j-1}, \; \rho \circ a_{3j-1} \circ \rho^{-1}=a_{3j},\\
\rho \circ b_{3j-2} \circ \rho^{-1}=b_{3j-1}, \; \rho \circ b_{3j-1} \circ \rho^{-1}=b_{3j}, \; (j=1,\ldots,\frac{g-1}{3}),\\
\rho \circ a_{g} \circ \rho^{-1}=a_{g}, \; \rho \circ b_{g} \circ \rho^{-1}=b_{g},\;
\psi_{h} \circ a_{3j-2} \circ \psi_{h}=a_{3j-2}^{-1}, \; \psi_{h} \circ a_{3j-1} \circ \psi_{h}=a_{3j}^{-1},\\
\psi_{h} \circ b_{3j-2} \circ \psi_{h}=b_{3j-2}^{-1}, \; \psi_{h} \circ b_{3j-1} \circ \psi_{h}=b_{3j}^{-1}, \; (j=1,\ldots,\frac{g-1}{3}),\\
\psi_{h} \circ a_{g} \psi_{h}=a_{g}^{-1}, \; \psi_{h} \circ b_{g} \psi_{h}=b_{g}^{-1}\rangle=
\langle a_{1},\ldots,a_{g},b_{1},\ldots,b_{g}\rangle \rtimes \langle \rho,\psi_{h} \rangle \cong {\mathbb Z}_{2}^{2g} \rtimes {\mathfrak S}_{3}.
\end{array}
$$

%\medskip

Let us consider the following $\psi_{h}$-invariant subgroup of $M_{2}$ (which is not $\rho$-invariant)
$$N_{1}=\langle a_{1},a_{2}a_{3},a_{4},\ldots,a_{g},b_{1},\ldots,b_{g}\rangle \cong {\mathbb Z}_{2}^{2g-1}.$$

The maximal $\rho$-invariant subgroup of $N_{1}$ is
$$N_{2}=N_{1} \cap \rho N_{1} \rho^{-1} \cap \rho^{-1} N_{1} \rho=\langle a_{1}a_{2}a_{3},a_{4},\ldots,a_{g},b_{1},\ldots,b_{g}\rangle \cong {\mathbb Z}_{2}^{2g-2},$$

In this case, where $\theta_{2}(a_{j})=A_{j}$, $\theta_{2}(b_{j})=B_{j}$, $\theta_{2}(\rho)=\Psi_{r}$, and $\theta_{2}(\psi_{h})=\Psi_{h}$,
$$\begin{array}{l}
{\mathcal U}=N_{1}/N_{2}=\langle A_{1}: A_{1}^{2}=1\rangle \cong {\mathbb Z}_{2},\\
{\mathcal G}=\widetilde{L}_{2}/N_{2}=\langle A_{1}, A_{2},\Psi_{r}, \Psi_{h}: 
A_{1}^{2}=A_{2}^{2}=[A_{1},A_{2}]=\Psi_{r}^{3}=\Psi_{h}^{2}=(\Psi_{r} \circ \Psi_{h})^{2}=1,\\
\Psi_{r} \circ A_{1} \circ \Psi_{r}^{-1}=A_{2},  \Psi_{r} \circ A_{2} \circ \Psi_{r}^{-1}=A_{1} \circ A_{2},\\
\Psi_{h} \circ A_{1} \circ \Psi_{h}=A_{1},  \Psi_{h} \circ A_{2} \circ \Psi_{h}=A_{1} \circ A_{2},
\rangle \cong {\mathfrak S}_{4},\\
\widehat{L}=\widetilde{L}_{2}/M_{2}=\langle \Psi_{r}, \Psi_{h}: \Psi_{r}^{3}=\Psi_{h}^{2}=(\Psi_{r} \circ \Psi_{h})^{2}=1 \cong {\mathfrak S}_{3}.
\end{array}
$$

Set
$$\begin{array}{l}
\widehat{\mathcal A}=M_{2}/N_{1} \cong {\mathbb Z}_{2},\\
{\mathcal A}=\langle M_{2}, \psi_{h}\rangle /\langle N_{1}, \psi_{h}\rangle=M_{2}/N_{1}=\langle A_{2}: A_{2}^{2}=1\rangle \cong {\mathbb Z}_{2},\\
{\mathcal H}=\langle M_{2}, \psi_{h}\rangle/N_{2}=\langle A_{1},A_{2}, \Psi_{h}: 
A_{1}^{2}=A_{2}^{2}=[A_{1},A_{2}]=\Psi_{h}^{2}=1,\\
\Psi_{h} \circ A_{1} \circ \Psi_{h}=A_{1},  \Psi_{h} \circ A_{2} \circ \Psi_{h}=A_{1} \circ A_{2}\rangle=\langle A_{1}, A_{2} \rangle \rtimes \langle \Psi_{h}\rangle  \cong D_{4}\\
{\mathcal K}=M_{2}/N_{2}=\langle A_{1},A_{2}: A_{1}^{2}=A_{2}^{2}=[A_{1},A_{2}]=1\rangle \cong {\mathbb Z}_{2}^{2},
\end{array}
$$
and the Riemann surfaces
$Z=\widetilde{S}_{2}/N_{2}, \; Y=\widetilde{S}_{2}/\langle N_{1}, \psi_{h}\rangle, \; X=\widetilde{S}_{2}/\langle M_{2}, \psi_{h}\rangle,$
together with the Galois coverings
$P:Y \to X$ (whose deck group is ${\mathcal A}$), $\varphi:X \to \widehat{\mathbb C}$ a covering induced by the inclusion of $\langle M_{2},\psi_{h}\rangle$ in ${\mathcal G}$,  
$\widetilde{\varphi}:S \to \widehat{\mathbb C}$ whose deck group is $L$, and $\pi_{U}:U \to S$ a Galois covering with deck group $\widehat{\mathcal A}$. 

Note that $\varphi \circ P:Y \to \widehat{\mathbb C}$ is a non-Galois covering of degree $6$, and that the Galois branched covering $\widetilde{\varphi}$ is the Galois closure of $\varphi$. Moreover, both $\widetilde{\varphi} \circ \pi_{S}$ and $\varphi \circ P$ have the same Galois closure $\widetilde{\psi}:Z \to \widehat{\mathbb C}$, with deck group ${\mathcal G}={\mathcal K} \rtimes \widehat{L} \cong {\mathbb Z}_{2}^{2} \rtimes {\mathfrak S}_{3} \cong {\mathfrak S}_{4}$.
\qed
\end{example}

%%%%%%%%%%%%%%%%
\subsection{Case $k \geq 3$ is divisible by $3$}
\begin{lemm}\label{lema2}
If $k \geq 3$ is divisible by $3$, then there is no $\rho \in \widetilde{L}_{k}$ such that $\rho^{3}=1$ and $\theta_{k}(\rho)=r$. In particular, 
$\widetilde{L}_{k}$ is not isomorphic to $M_{k} \rtimes {\mathfrak S}_{3}$.
\end{lemm}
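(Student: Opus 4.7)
The plan is to exhibit a $b_g$-coordinate obstruction that prevents any lift of $r$ from having order exactly $3$. Since $\psi_r \in \widetilde{L}_k$ satisfies $\theta_k(\psi_r)=r$ and $\psi_r^3 = b_g$, every lift of $r$ is of the form $\rho = \psi_r \circ \alpha$ with $\alpha \in M_k$, so it suffices to show $\rho^3 \neq 1$ for all $\alpha$.

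Using the conjugation relation $\psi_r \circ x \circ \psi_r^{-1} = r_*(x)$ for $x \in M_k$ to slide each $\psi_r$ past each $\alpha$, together with $\psi_r^3 = b_g$ and the commutativity of $M_k$, a short induction gives
$$\rho^3 = (\psi_r \circ \alpha)^3 = \alpha \circ r_*(\alpha) \circ r_*^2(\alpha) \circ b_g.$$
Writing $\alpha = \prod_{i=1}^{g} a_i^{l_i} \prod_{j=1}^{g} b_j^{n_j}$ and using the description of $r_*$ from the presentation of $\widetilde{L}_k$ (which cyclically permutes each triple $(a_{3j-2},a_{3j-1},a_{3j})$ and $(b_{3j-2},b_{3j-1},b_{3j})$ for $1 \leq j \leq (g-1)/3$, and fixes $a_g$ and $b_g$), the ``norm'' $\alpha \cdot r_*(\alpha) \cdot r_*^2(\alpha)$ equals
$$\prod_{j=1}^{(g-1)/3} (a_{3j-2} a_{3j-1} a_{3j})^{l_{3j-2}+l_{3j-1}+l_{3j}} (b_{3j-2} b_{3j-1} b_{3j})^{n_{3j-2}+n_{3j-1}+n_{3j}} \cdot a_g^{3 l_g} \cdot b_g^{3 n_g}.$$
Consequently, the $b_g$-exponent of $\rho^3$ is $3 n_g + 1$.

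For $\rho^3 = 1$ we would need $3 n_g + 1 \equiv 0 \pmod{k}$. Since $3 \mid k$, reducing modulo $3$ forces $1 \equiv 0 \pmod{3}$, which is absurd. Hence no lift of $r$ has order $3$. For the ``in particular'' clause, if the sequence $1 \to M_k \to \widetilde{L}_k \to L \to 1$ split (equivalently, if $\widetilde{L}_k \cong M_k \rtimes {\mathfrak S}_3$ in the sense used throughout the paper), the ${\mathfrak S}_3$-complement would contain an order-$3$ element mapping to $r$ under $\theta_k$, contradicting what we just proved.

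There is no genuine obstacle; the argument is pure bookkeeping once the formula for $\rho^3$ is set up. The conceptual point is that $b_g$ is fixed by $r_*$, so the $b_g$-exponent of the norm is automatically a multiple of $3$, while $\psi_r^3 = b_g$ contributes a stray $+1$ that the action of $M_k$ cannot absorb when $3 \mid k$.
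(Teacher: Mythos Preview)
Your proof is correct and follows essentially the same approach as the paper: both write an arbitrary lift of $r$ as $\psi_r \circ \alpha$ with $\alpha \in M_k$, compute $(\psi_r \circ \alpha)^3 = \alpha \cdot r_*(\alpha) \cdot r_*^2(\alpha) \cdot b_g$ (the paper writes this as $\hat{m}\,\hat{\hat{m}}\,b_g\,m$, which is the same by commutativity of $M_k$), and observe that the $b_g$-exponent is $3n_g+1$, hence nonzero modulo $k$ when $3\mid k$. Your version is slightly more explicit in writing out the full norm and in spelling out the ``in particular'' clause, but the argument is the same.
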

\begin{proof}
Every element $ \rho \in \widetilde{L}_{k}$ such that $\theta_{k}(\rho)=r$ has the form $\psi_{r} \circ m$, where $m \in M_{k}$.
Let us write 
$$m=a_{1}^{\alpha_{1}} \circ \cdots \circ a_{g}^{\alpha_{g}} \circ b_{1}^{\beta_{1}} \circ \cdots \circ b_{g}^{\beta_{g}},$$ 
then 
$$(\psi_{r} \circ m)^{3}=\hat{m} \circ \hat{\hat{m}} \circ \psi_{r}^{3} \circ m=\hat{m} \circ \hat{\hat{m}} \circ b_{g} \circ m,$$
where 
$$\hat{m}=\psi_{r} \circ m \circ \psi_{r}^{-1}, \; \hat{\hat{m}}=\psi_{r} \circ \hat{m} \circ \psi_{r}^{-1}.$$

In order for $\rho^{3}=1$, we must have that $m \circ \hat{m} \circ \hat{\hat{m}} \circ b_{g}=1$. But, the exponent of $b_{g}$ in 
$m \circ \hat{m} \circ \hat{\hat{m}} \circ b_{g}$ is $1+3\beta_{g}$, a contradiction.
\end{proof}

\begin{rema}
Note that the above lemma only asserts that the group $\widetilde{L}_{3}$ does not contain a copy of ${\mathfrak S}_{3}$ that projects onto $L$. It might have other subgroups isomorphic to ${\mathfrak S}_{3}$, but they are projected to the subgroup $\langle h \rangle$ of $L$.\qed
\end{rema}

%%%%%%%%%%%
\begin{example}[$n=5$ and $k=3$]\label{Sec:(n,k)=(5,3)}
In this case, $S$ has genus $g=4$, and $S/L$ is the Riemann sphere with exactly $6$ conical points, each one of order $2$. Let $\widetilde{\varphi}:S \to \widehat{\mathbb C}$ be a branched Galois covering with deck group $L$.
Let $N_{1}$ be a $\psi_{h}$-invariant subgroup of $M_{3}$, $U=\widetilde{S}_{3}/N_{1}$ and $\pi_{S}:U \to S$
an abelian unbranched Galois covering  with deck group $\widehat{\mathcal A}=M_{3}/N_{1}$. If $N_{2}={\rm Core}_{\widetilde{L}_{3}}(N_{1})$, then we also have the Riemann surface $Z=\widetilde{S}_{3}/N_{2}$, together an abelian unbranched covering $\pi_{U}:Z \to U$ with deck group ${\mathcal U}=N_{1}/N_{2}$. If ${\mathcal K}=M_{3}/N_{2}$, then $S=Z/{\mathcal K}$.
The Galois closure of $\widetilde{\varphi} \circ \pi_{S}$ is $\widetilde{\psi}=\widetilde{\varphi} \circ \pi_{S} \circ \pi_{U}:Z \to \widehat{\mathbb C}$, with deck group ${\mathcal G}=\widetilde{L}_{3}/N_{2}$. 
As a consequence of Lemma \ref{lema2}, 
${\mathcal G}={\mathcal K} \rtimes \hat{L} \cong {\mathcal K}\rtimes L$ if and only if $b_{4} \in N_{2}$ (which is equivalent to have $b_{4} \in N_{1}$ as $\langle b_{4}\rangle$ is normal subgroup of $\widetilde{L}_{3}$). Below, we consider some examples for different choices of subgroups $N_{1}$.

%%%%%%%%%%%%%%
\subsubsection*{Case (1): $N_{1}=N_{2} \cong {\mathbb Z}_{3}^{7}$}
In this case,  ${\mathcal K}={\mathbb Z}_{3}$.
With the help of GAP \cite{GAP}, we can observe exactly $40$ possibilities for $N_{1}$. Only $13$ of them have the property that $b_{4} \in N_{1}$, so ${\mathcal G} ={\mathcal K} \rtimes \hat{L} \cong {\mathbb Z}_{3} \rtimes {\mathfrak S}_{3}$. In the other $27$ cases, this semidirect product is not true (but it must observe that inside ${\mathcal G}$ there are subgroups isomorphic to ${\mathfrak S}_{3}$, all of them are projected to $\langle h \rangle$).

%%%%%%%%%%%%
\subsubsection*{Case (2): $N_{1}=N_{2} \cong {\mathbb Z}_{3}^{6}$}
Take $N_{1}=\langle a_{1}a_{2}a_{3},a_{4},b_{1},b_{2},b_{3},b_{4}\rangle \cong {\mathbb Z}_{3}^{6}$, which is normal subgroup of $\widetilde{L}_{3}$, and $b_{4} \in N_{1}$. In this case, 
${\mathcal K}={\mathbb Z}_{3}^{2}$, and
$$\begin{array}{l}
{\mathcal G}=\widetilde{L}_{3}/N_{2}=\langle A_{1},A_{2},\Psi_{r},\Psi_{h}: A_{1}^{3}=A_{2}^{3}=\Psi_{r}^{3}=\Psi_{h}^{2}=(\Psi_{r} \circ \Psi_{h})^{2}=[A_{1},A_{2}]=1;\\
\Psi_{r} \circ A_{1} \circ \Psi_{r}^{-1}=A_{2}; \; \Psi_{r} \circ A_{2} \circ \Psi_{r}^{-1}=A_{1}^{-1} \circ A_{2}^{-1};\;
\Psi_{h} \circ A_{1} \circ \Psi_{h}=A_{1}^{-1}; \\ \Psi_{h} \circ A_{2} \circ \Psi_{h}=A_{1} \circ A_{2} \rangle=
\langle A_{1}, A_{2}\rangle \rtimes \langle \Psi_{r},\Psi_{h}\rangle \cong {\mathbb Z}_{3}^{2} \rtimes {\mathfrak S }_{3}
\end{array}
$$

%%%%%%%%%%%%
\subsubsection*{Case (3): $N_{1}=N_{2} \cong {\mathbb Z}_{3}^{6}$}
Take $N_{1}=\langle a_{1},a_{2},a_{3},b_{1},b_{2},b_{3}\rangle \cong {\mathbb Z}_{3}^{6}$, which normal subgroup of $\widetilde{L}_{3}$. In this case, 
${\mathcal K}={\mathbb Z}_{3}^{2}$. As in this case $b_{4} \notin N_{1}$,  ${\mathcal G}$ cannot be the semidirect product ${\mathcal K} \rtimes \hat{L} \cong {\mathbb Z}_{3}^{2} \rtimes {\mathfrak S }_{3}$, in fact 
$$\begin{array}{l}
{\mathcal G}=\widetilde{L}_{3}/N_{2}=\langle A_{4},B_{4},\Psi_{r},\Psi_{h}: A_{4}^{3}=B_{4}^{3}=\Psi_{h}^{2}=(\Psi_{r} \circ \Psi_{h})^{2}=[A_{4},B_{4}]=1; \;  \Psi_{r}^{3}=B_{4};\\
\Psi_{r} \circ A_{4} \circ \Psi_{r}^{-1}=A_{4}; \; \Psi_{r} \circ B_{4} \circ \Psi_{r}^{-1}=B_{4};\;
\Psi_{h} \circ A_{4} \circ \Psi_{h}=A_{4}^{-1}; \; \Psi_{h} \circ B_{4} \circ \Psi_{h}=B_{4}^{-1} \rangle.
\end{array}
$$

%%%%%%%%%%%%
\subsubsection*{Case (4): $N_{1} \cong {\mathbb Z}_{3}^{6}$, $N_{1} \neq N_{2}$}
Take $N_{1}=\langle a_{1},a_{2},a_{4},b_{1},b_{2},b_{3}\rangle \cong {\mathbb Z}_{3}^{6}$, which is $\psi_{h}$-invariant, but it is not $\psi_{r}$-invariant. Its maximal subgroup which is $\langle \psi_{h}, \psi_{r}\rangle$-invariant is $N_{2}=\langle a_{4},b_{1},b_{2},b_{3}\rangle \cong {\mathbb Z}_{3}^{4}$. Note that, as in this case $b_{4} \notin N_{1}$,  ${\mathcal G}$ cannot be the semidirect product between ${\mathcal K} = M_{3}/N_{2}\cong {\mathbb Z}_{3}^{2}$ with a subgroup $\hat{L} \cong {\mathfrak S }_{3}$ of ${\mathcal G}$. In this case, 
$$\begin{array}{l}
{\mathcal G}=\widetilde{L}_{3}/N_{2}=\langle A_{1},A_{2},A_{3},B_{4},\Psi_{r},\Psi_{h}: A_{j}^{3}=B_{4}^{3}=\Psi_{h}^{2}=(\Psi_{r} \circ \Psi_{h})^{2}=1; \; \Psi_{r}^{3}=B_{4};\\
\mbox{}[A_{i},A_{j}]=[A_{i},B_{4}]=1 \; (i,j =1,2,3); \\
\Psi_{r} \circ A_{1} \circ \Psi_{r}^{-1}=A_{2}; \; \Psi_{r} \circ A_{2} \circ \Psi_{r}^{-1}=A_{3}; \; \Psi_{r} \circ A_{3} \circ \Psi_{r}^{-1}=A_{1};\; \Psi_{r} \circ B_{4} \circ \Psi_{r}^{-1}=B_{4};\\
\Psi_{h} \circ A_{1} \circ \Psi_{h}=A_{1}^{-1}; \; \Psi_{h} \circ A_{2} \circ \Psi_{h}=A_{3}^{-1}; \; \Psi_{h} \circ A_{3} \circ \Psi_{h}=A_{2}^{-1};\; \Psi_{h} \circ B_{4} \circ \Psi_{h}=B_{4}^{-1} \rangle.
\end{array}
$$

We may note that ${\mathcal G}$ contains many copies of ${\mathfrak S}_{3}$, but each of them is projected to $\langle h \rangle$.

%%%%%%%%%%%%
\subsubsection*{Case (5): $N_{1} \cong {\mathbb Z}_{3}^{6}$, $N_{1} \neq N_{2}$}
Take $N_{1}=\langle a_{1},a_{2},a_{4},b_{1},b_{2},b_{4}\rangle \cong {\mathbb Z}_{3}^{6}$, which is $\psi_{h}$-invariant, but it is not $\psi_{r}$-invariant. Its maximal subgroup which is $\langle \psi_{h}, \psi_{r}\rangle$-invariant is $N_{2}=\langle a_{4},b_{4}\rangle \cong {\mathbb Z}_{3}^{2}$, and 
${\mathcal K}={\mathbb Z}_{3}^{6}$, and 
$$\begin{array}{l}
{\mathcal G}=\widetilde{L}_{3}/N_{2}=\langle A_{1},A_{2},A_{3},B_{1},B_{2},b_{3},\Psi_{r},\Psi_{h}: A_{j}^{3}=B_{4}^{3}=\Psi_{r}^{3}=\Psi_{h}^{2}=(\Psi_{r} \circ \Psi_{h})^{2}=1; \\
\mbox{}[A_{i},A_{j}]=[A_{i},B_{4}]=1 \; (i,j =1,2,3); \\
\Psi_{r} \circ A_{1} \circ \Psi_{r}^{-1}=A_{2}; \; \Psi_{r} \circ A_{2} \circ \Psi_{r}^{-1}=A_{3}; \; \Psi_{r} \circ A_{3} \circ \Psi_{r}^{-1}=A_{1};\\
\Psi_{r} \circ B_{1} \circ \Psi_{r}^{-1}=B_{2}; \; \Psi_{r} \circ B_{2} \circ \Psi_{r}^{-1}=B_{3}; \; \Psi_{r} \circ B_{3} \circ \Psi_{r}^{-1}=B_{1};\\
\Psi_{h} \circ A_{1} \circ \Psi_{h}=A_{1}^{-1}; \; \Psi_{h} \circ A_{2} \circ \Psi_{h}=A_{3}^{-1}; \; \Psi_{h} \circ A_{3} \circ \Psi_{h}=A_{2}^{-1};\\
\Psi_{h} \circ B_{1} \circ \Psi_{h}=B_{1}^{-1}; \; \Psi_{h} \circ B_{2} \circ \Psi_{h}=B_{3}^{-1}; \; \Psi_{h} \circ B_{3} \circ \Psi_{h}=B_{2}^{-1}
\rangle=\\
=\langle A_{1},A_{2},A_{3},B_{1},B_{2},B_{3}\rangle \rtimes \langle \Psi_{r},\Psi_{h}\rangle
\cong 
{\mathbb Z}_{3}^{6} \rtimes {\mathfrak S }_{3}.
\end{array}
$$
\qed
\end{example}

%%%%%%%%%%%%%%%%%%
%%%%%%%%%%%%%%%%%%

\end{document}